%---------------------------------------------------------------%
%% 
%% 
%---------------------------------------------------------------%
\documentclass[10pt,leqno]{article}
\usepackage{amsmath}
\usepackage{amsfonts}
\usepackage{amsthm}
\usepackage{indentfirst}

\theoremstyle{plain}
\newtheorem{theorem}{\indent\rm T\,h\,e\,o\,r\,e\,m\;}[section]

\newtheorem{proposition}{\indent\rm P\,r\,o\,p\,o\,s\,i\,t\,i\,o\,n\;}[section]
\newtheorem{corollary}{\indent\rm C\,o\,r\,o\,l\,l\,a\,r\,y\;}[section]

\theoremstyle{definition}
\newtheorem{definition}{\indent\rm D\,e\,f\,i\,n\,i\,t\,i\,o\,n\;}[section]

\theoremstyle{remark}
\newtheorem{remark}{\indent\rm R\,e\,m\,a\,r\,k\;}[section]
\newtheorem{notations}{\indent N\,o\,t\,a\,t\,i\,o\,n\,s\;}[section]

\renewenvironment{proof}{\indent\rm P\,r\,o\,o\,f.\;}{\hfill $\square$ \\ \indent}

%% place limits directly above and below the integral
\def\ints{\int\limits}

%%%%%%%%%%%%%%%%%%%%%%%%%%%%%%%%%%%%%%%%%%%%%%%%%%%%%%%%%%%%%%%%
%             DON'T CHANGE THE LINES BELOW                     %
%%%%%%%%%%%%%%%%%%%%%%%%%%%%%%%%%%%%%%%%%%%%%%%%%%%%%%%%%%%%%%%%
                                                               %
\makeatletter                                                  %
%%--------section:                                             %
\renewcommand*{\@seccntformat}[1]{%                            %
  \csname the#1\endcsname\;-                                   %
}                                                              %
\renewcommand{\section}{\@startsection{section}{1}{0mm}        %
   {1.5\baselineskip}%         beforeskip                      %
   {1\baselineskip}%           afterskip                       %
   {\indent\normalfont\normalsize\bfseries}%                   %
   }                                                           %
%%%-------subsection:                                          %
\renewcommand*{\@seccntformat}[1]{%                            %
  \normalfont\bfseries\csname the#1\endcsname\;-               %
}                                                              %
\renewcommand\subsection{\@startsection                        %
  {subsection}{2}{0mm}%name, level, indent                     %
  {1.5\baselineskip}%          beforeskip                      %
  {1\baselineskip}%            afterskip                       %
  {\indent\normalfont\normalsize\itshape}}% style              %
%%%-------subsubsection:                                       %
\renewcommand*{\@seccntformat}[1]{%                            %
  \normalfont\bfseries\csname the#1\endcsname\;-               %
}                                                              %
\renewcommand\subsubsection{\@startsection                     %
  {subsubsection}{2}{0mm}%name, level, indent                  %
  {1.5\baselineskip}%          beforeskip                      %
  {1\baselineskip}%            afterskip                       %
  {\indent\normalfont\normalsize\texttt}}% style               %
\makeatother                                                   %
                                                               %
%%%%%%%%%%%%%%%%%%%%%%%%%%%%%%%%%%%%%%%%%%%%%%%%%%%%%%%%%%%%%%%%

\begin{document}
\thispagestyle{empty}

\parindent 0pt

%----- first page ---------- HEADLINE 
%\vskip -8in
%\begin{center}
%\end{center}
%\vspace {2.2cm}

%----- first page -------- First Name (entire, not initials) Last Name of AUTHORS
\begin{center}
{\sc\large Alberto Dolcetti}
\ {\small and}
 \ {\sc\large Donato Pertici}
\end{center}
\vspace {1.5cm}

%----- first page ---------- TITLE
\centerline{\large{\textbf{ Some differential properties of $GL_n(\mathbb{R})$}}}
\smallskip
\centerline{\large{\textbf{ with the trace metric}}}

%----- first page ---------- footnote:
\renewcommand{\thefootnote}{\fnsymbol{footnote}}

\footnotetext{
%% if necessary :
This research was partially supported by MIUR-PRIN: ``Variet\`a reali e complesse: geo\-me\-tria, topologia e analisi armonica''}.

\renewcommand{\thefootnote}{\arabic{footnote}}
\setcounter{footnote}{0}
%----- first page ---------------------

\vspace{1,5cm}
\begin{center}
\begin{minipage}[t]{10cm}

%----- first page ---------------- Abstract
\small{ \noindent \textbf{Abstract.} In this note we consider some properties of $GL_n(\mathbb{R})$ with the Semi-Riemannian structure induced by the trace metric $g$. In particular we study geodesics and curvature tensors. Moreover we prove that $GL_n$ has a suitable foliation, whose leaves are isometric to $(SL_n(\mathbb{R}), g)$, while its component of matrices with positive determinant is isometric to the  Semi-Riemannian product manifold $SL_n \times \mathbb{R}$.
\medskip

%----- first page ------------- Keywords
\noindent \textbf{Keywords.} Trace metric, Semi-Riemannian manifold, geodesic, curvature tensors, nonsingular (special) matrix, exponential and logarithm of a real matrix.
\medskip

%----- first page -------------- AMS classification codes
\noindent \textbf{Mathematics~Subject~Classification~(2010):}
53C50, 53C22, 15A16.

}
\end{minipage}
\end{center}

%%if necessary :
%%\medskip
%%\tableofcontents
\bigskip

\section*{Introduction}

The so-called trace metric $g$: $g_A(V,W) = tr(A^{-1}VA^{-1}W)$ for any $A \in GL_n$ and any $V, W \in T_A(GL_n)$ ($tr$ indicates the trace of a matrix) induces a Semi-Riemannian structure on $GL_n=GL_n(\mathbb{R})$.  The metric $g$ is often studied in the context of positive definite real matrices on which it defines a structure of Riemannian manifold. The geometry of the Riemannian manifold of positive definite real matrices has recently been object of interest in different frameworks. We refer the reader for instance to \cite{Lan1999} Ch.XII, \cite{LawL2001}, \cite{ALM2004}, \cite{BhaH2006} \S2, \cite{Bha2007} Ch.6, \cite{MoZ2011} for more details and further information on this subject. In particular geodesic arcs between two positive definite matrices have been studied in details, because their middle point is their expected geometric mean. Of course existence, uniqueness and explicit descriptions of geodesics have a fundamental role in this setting and are reached in many ways: for instance as consequences of an exponential increasing metric property (see for instance \cite{BhaH2006}, \cite{Bha2007} Ch.6) or as solutions of the second-order differential equation $\stackrel{..}{P} - \stackrel{.}{P} \stackrel{ }P^{-1} \stackrel{.}{P} = 0$ with certain initial data (see \cite{MoZ2011} Sec.3.5).

In this note we generalize some arguments used in the second approach and prove that $(GL_n, g)$ is Semi-Riemannian with signature
$(\dfrac{n(n+1)}{2}, \dfrac{n(n-1)}{2})$ (Proposition \ref{homog}), whose connected components, $(GL_n^+, g)$ and $(GL_n^-, g)$, are symmetric manifolds (Proposition \ref{isomGL}).

The characterization of geodesics of $(GL_n, g)$ is in Theorem \ref{Levi-Civita_geod_GL}, where also the Levi-Civita connection is described. As in the case of positive definite matrices of \cite{MoZ2011}, the geodesics are solutions of the previous differential equation. Moreover geodesic arcs between two points $K_0$ and $K_1$ of $GL_n$ correspond to real solutions of the exponential matricial equation $exp(X) = K_0^{-1} K_1$ (Corollary \ref{Cor_to_Levi-Civita_geod_GL}) and so we are able to translate the existence of these geodesic arcs in Theorem \ref{Culver_for_GL_n} in terms of Jordan form of $K_0^{-1}K_1$ by means of \cite{Culv1966}. In particular any two points of $GL_n$ can be always joined by a geodesic arc or by a singly broken geodesic arc (Proposition \ref{singly_brocken}). When the geodesic arc is unique, we give its explicit expression (Proposition \ref{expression_unique_geodesic_arc}). By the way we observe that the Levi-Civita connection of $(GL_n, g)$ is the Cartan-Schouten $(0)$-connection of $GL_n$ (Corollary \ref{rem_before_0_conn}).

Afterwards we compute the Riemann curvature tensors of type $(1,3)$ and of type $(0,4)$ and the sectional curvature (Proposition \ref{RiemannTens_GL}) and also the Ricci curvature and the scalar curvature of $(GL_n, g)$ (Proposition \ref{Ricci_GL}).

Finally we focus our attention on $SL_n(\mathbb{R}) = SL_n$, where the metric $g$ sets up a structure of   Einstein, symmetric, totally geodesic, Semi-Riemannian submanifold of $(GL_n, g)$ (Propositions \ref{symmetric_semiriem_SL},  \ref{geodSL} and \ref{SL_Einstein}) and we show that $GL_n$ has a foliation, whose leaves are Einstein, symmetric, geodesically complete, totally geodesic, Semi-Riemannian submanifolds and isometric to $(SL_n, g)$ (Theorem \ref{foliation_GL_SL}) and furthermore we show that its component of matrices with positive determinant is isometric to the Semi-Riemannian product manifold $(SL_n \times \mathbb{R}, g \times h)$ where $h$ is the euclidean metric on $\mathbb{R}$ (Theorem \ref{GL+_prod}).

Similar situations seem to appear even in case of some particular submanifolds of $(GL_n,g)$. These are the subject of further works currently in preparation.

We refer to \cite{O'N1983} for all standard facts on Semi-Riemannian manifolds and in particular for notions and notations not explicitally recalled here and also to \cite{Helg2001} for Riemannian symmetric spaces, while we refer to \cite{Hi2008} and to \cite{HoJ1985} for standard facts about matrices and exponential function.

\section{The Semi-Riemannian manifold $GL_n(\mathbb{R})$}

\begin{definition}
A Semi-Riemannian manifold $(M, g)$ is a smooth real manifold $M$ endowed with a metric tensor $g$, i.e. a symmetric nondegenerate $(0,2)$ tensor field $g$ of constant signature.
\end{definition}

\begin{remark}\label{signature_semir_homog}
If $g$ is simply supposed to be a symmetric $C^\infty$-tensor of type $(0,2)$ on $M$ and $M$ is supposed to be homogeneous, i.e. for every $p_1,p_2\in M$ there is a diffeomorphism $F: M \to M$ with $F(p_1)=p_2$ and preserving $g$, then $g$ is nondegenerate (i.e. it is a Semi-Riemannian metric) if and only if it is so at one point.
\end{remark}

\begin{notations}
We denote by $M_n= M_n(\mathbb{R})$ and $GL_n = GL_n(\mathbb{R})$ respectively the vector space of real square matrices of order $n$ and the multiplicative group of nondegenerate matrices in $M_n$.  $GL_n$ is a Lie group of dimension $n^2$ with two connected components, depending on their determinant: $GL_n^+$ and $GL_n^-$. $M_n$ is the Lie algebra of $GL_n$ and the tangent space of $GL_n$ at $A \in GL_n$ is $T_A(GL_n) = M_n$. $SL_n$ is the connected Lie subgroup of $GL_n$ of matrices with determinant $1$ and we put $SL_n(c) = \{ M \in GL_n \ / \ \det (M) = c \}$ for any $c \in \mathbb{R} \setminus \{0\}$.

$S_n$ and $A_n$ are the vector subspaces of $M_n$ of symmetric and skew symmetric matrices respectively.

As usual $I=I_n$ is the identity matrix, $[A,B] = AB-BA$ for any $A, B \in M_n$ and also $[X,Y] = X \circ Y - Y\circ X$ for any $X$, $Y$ vector fields on $GL_n$. 

We define a $C^\infty$-tensor $g$ of type $(0,2)$ on $GL_n$, by $g_A(V,W) = tr(A^{-1}VA^{-1}W)$ ($tr$ indicates the trace of a matrix). This tensor induces a metric, called also trace metric, often considered in the context of positive definite real matrices on which it defines the structure of Riemannian manifold (see for instance \cite{Lan1999} Ch.XII, \cite{BhaH2006} \S2, \cite{Bha2007} Ch.6, \cite{MoZ2011} \S3).

From now on, $g$ will indicate this tensor.
\end{notations}

\begin{proposition}\label{homog}
$(GL_n, g)$ is a homogeneous Semi-Riemannian manifold with signature 
$(\dfrac{n(n+1)}{2}, \dfrac{n(n-1)}{2})$.
\end{proposition}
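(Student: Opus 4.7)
The plan is to first establish homogeneity by exhibiting explicit isometries, then reduce the signature computation to a single point via Remark \ref{signature_semir_homog}, and finally compute the signature at the identity using the natural splitting $M_n = S_n \oplus A_n$.

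For homogeneity, given $A, B \in GL_n$, I consider left translation $L_C : X \mapsto CX$ with $C = BA^{-1}$. This is a diffeomorphism of $GL_n$ sending $A$ to $B$, and its differential at $X$ is again $V \mapsto CV$. A direct computation using the cyclic property of the trace shows
\[
g_{CX}(CV, CW) = \mathrm{tr}\bigl((CX)^{-1} CV (CX)^{-1} CW\bigr) = \mathrm{tr}(X^{-1}V X^{-1}W) = g_X(V,W),
\]
so $L_C$ preserves $g$. Hence $(GL_n, g)$ is homogeneous (as a manifold equipped with the $(0,2)$-tensor $g$), and by Remark \ref{signature_semir_homog} it suffices to prove that $g_I$ is nondegenerate and to compute its signature at the single point $I$.

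At the identity $g_I(V,W) = \mathrm{tr}(VW)$. Decompose $M_n = S_n \oplus A_n$ (symmetric plus skew-symmetric part, of dimensions $\frac{n(n+1)}{2}$ and $\frac{n(n-1)}{2}$). The two summands are $g_I$-orthogonal: for $V \in S_n$ and $W \in A_n$,
\[
\mathrm{tr}(VW) = \mathrm{tr}\bigl((VW)^T\bigr) = \mathrm{tr}(W^T V^T) = -\mathrm{tr}(WV) = -\mathrm{tr}(VW),
\]
so the mixed term vanishes. On $S_n$, writing $V^T = V$ gives $g_I(V,V) = \mathrm{tr}(V V^T) = \sum_{i,j} V_{ij}^2 \geq 0$, with equality iff $V = 0$; thus $g_I|_{S_n}$ is positive definite. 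On $A_n$, writing $V^T = -V$ gives $g_I(V,V) = -\mathrm{tr}(V V^T) = -\sum_{i,j} V_{ij}^2 \leq 0$, with equality iff $V = 0$; thus $g_I|_{A_n}$ is negative definite.

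Combining, $g_I$ is nondegenerate of signature $\bigl(\frac{n(n+1)}{2}, \frac{n(n-1)}{2}\bigr)$. By homogeneity and Remark \ref{signature_semir_homog}, $g$ is a Semi-Riemannian metric on $GL_n$ of this same constant signature. No step looks genuinely difficult; the only thing to be careful about is verifying both the isometry property of $L_C$ and the sign-flip argument that makes $S_n$ and $A_n$ orthogonal — everything else is a routine trace computation together with invocation of the preceding remark.
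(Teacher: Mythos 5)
Your proposal is correct and follows essentially the same route as the paper: homogeneity via left translations preserving $g$, reduction to the identity by Remark \ref{signature_semir_homog}, and the orthogonal splitting $M_n = S_n \oplus A_n$ with $g_I$ positive definite on $S_n$ and negative definite on $A_n$. The only cosmetic difference is that the paper checks nondegeneracy of $g_I$ separately (via $W = V^T$) while you deduce it from the definiteness on the two orthogonal summands, which is equally valid.
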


\begin{proof}
Let us consider the left translation $L_G: GL_n \to GL_n, X \mapsto GX$ and the right translation $R_G: GL_n \to GL_n, X \mapsto XG$ and prove that both preserve the tensor $g$ for every $G \in GL_n$.

Indeed $L_G$ and $R_G$ are both linear, hence $(DL_G)_A = L_G$ and $(DR_G)_A = R_G$ at each point $A \in GL_n$. Therefore for any $A \in GL_n$ and any $V, W \in M_n$ we have: 

\begin{multline*}
g_{L_G(A)} ((DL_G)_A(V), (DL_G)_A(W)) = g_{GA}(GV,GW)= \\
tr((GA)^{-1}GV(GA)^{-1}GW)= tr(A^{-1}VA^{-1}W) = g_A(V,W)
\end{multline*}

and analogously: 

$$g_{R_G(A)} ((DR_G)_A(V), (DR_G)_A(W)) = g_A(V,W).$$

The invariance of $g$ under left and right translations implies that both translations are isometries. This allows to deduce that $(GL_n, g)$ is a homogeneous manifold: if $A, B$ are in $GL_n$, then, for instance, the left translation $L_{BA^{-1}}$ preserves the tensor $g$ and maps $A$ to $B$. Hence to conclude, by Remark \ref{signature_semir_homog}, it is sufficient to argue for the single point $I=I_n$.

First we note that $g_I: M_n \times M_n \to \mathbb{R}$ is obviously a symmmetric bilinear form.

Now let $V$ be a matrix such that $g_I(V,W) = tr(VW)=0$ for every matrix $W \in M_n$. For $W=V^T$ (the transpose of $V$) we get $tr(VV^T)=0$, this suffices to get $V=0$, so $g_I$ is nondegenerate.

For any $S \in S_n$ and any $A \in A_n$ we have $g_I(S,A)=0$. Indeed $g_I(S,A)=tr(SA) = tr((SA)^T) = tr(A^TS^T) = - tr(AS) = - tr(SA) = -g_I(S,A)$. Moreover it is easy to check that $g_I(S,S) \ge 0$ with equality if and only if $S=0$ and that $g_I(A,A) \le 0$ with equality if and only if $A=0$. This gives that the restriction $g_I|_{S_n \times S_n}$ is positive definite, that the restriction $g_I|_{A_n \times A_n}$ is negative definite and that $S_n$ and $A_n$ are orthogonal with respect to $g_I$. Now $M_n = S_n \oplus A_n$, hence it follows that the index of positivity of $g_I$ is $\dfrac{n(n+1)}{2}$ (the dimension of $S_n$) and its index of negativity is $\dfrac{n(n-1)}{2}$ (the dimension of $A_n$), so the signature of $g_I$ is $(\dfrac{n(n+1)}{2}, \dfrac{n(n-1)}{2})$.
\end{proof}

\begin{remark}
As both left and right translations are isometries of $(GL_n,g)$, so all their compositions are; in particular: the opposite $X \mapsto -X=L_{-I_n}(X) = R_{-I_n}(X)$, the conjugacies $C_G$: $C_G(X)= G^{-1}XG$ and the congruences $\Gamma_G$: $\Gamma_G(X)= G^T XG$ ($G \in GL_n$).

Now let us denote by $\varphi: GL_n \to GL_n$ the inversion map, i.e. $\varphi(A) = A^{-1}$; $\varphi$ is a diffeomorphism of $GL_n$ onto itself with $\varphi^2= Id_{GL_n}$ and differential $(D\varphi)_A(V) = -A^{-1}VA^{-1}$ for any $A \in GL_n$ and any $V \in M_n$. 
Therefore we have:
\begin{multline*}
g_{\varphi(A)}((D\varphi)_A(V), (D\varphi)_A(W))= 
g_{A^{-1}}(-A^{-1}VA^{-1}, -A^{-1}WA^{-1}) = \\
 tr(A(-A^{-1}VA^{-1})A(-A^{-1}WA^{-1})) = tr(VA^{-1}WA^{-1}) = g_A(V,W) 
\end{multline*}
Hence also $\varphi$ is an isometry of $(GL_n, g)$.

Let us denote by $\tau: GL_n \to GL_n$ the transposition, i.e. $\tau(A) = A^T$; also $\tau$ is a diffeormorphism of $GL_n$ onto itself with $\tau^2= Id_{GL_n}$ and it is an isometry. Indeed its differential is $\tau$ itself being linear and, after denoting by $A^{-T} = (A^T)^{-1} = (A^{-1})^T$, we have 
\begin{multline*}
g_{\tau (A)}((D\tau)_A(V), (D\tau)_A(W)) =  \\
tr(A^{-T}V^TA^{-T}W^T) = tr(WA^{-1}VA^{-1}) = g_A(V,W).
\end{multline*}

Note that the symmetric nondegenerate matrices are the fixed points of the isometry $\tau$ on $GL_n$. 

Finally we recall that a Semi-Riemannian (globally) symmetric space is a connected Semi-Riemannian manifold $M$ such that for each $p \in M$ there is a (unique) isometry $\zeta_p: M \to M$ with differential map $-id$ on $T_p M$ and fixing $p$.
\end{remark}

\begin{proposition}\label{isomGL}
1) Among the isometries of the Semi-Riemannian manifold $(GL_n,g)$ there are the left translations $L_G$, and the right translations $R_G$, the conjugacies $C_G$, the congruences $\Gamma_G$ ($G \in GL_n$), the opposite, the inversion $\varphi$, the transpose $\tau$ and all their compositions.

2) Both $(GL_n^+, g)$ and $(GL_n^-, g)$ are symmetric manifolds and for any $A \in GL_n^+$ (or in $GL_n^-$)  the symmetry with respect to $A$ is $\psi_A = R_A \circ L_A \circ \varphi = L_A \circ R_A \circ \varphi$. In particular $\psi_{I_n} = \varphi$.
\end{proposition}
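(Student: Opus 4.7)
The plan is to handle the two parts separately, as Part 1 is essentially bookkeeping of what has already been established. Proposition \ref{homog} proved that the left and right translations $L_G, R_G$ are isometries; the preceding Remark verified the same for $\varphi$ and $\tau$ (by explicit differential computations). The conjugacies $C_G = L_{G^{-1}} \circ R_G$, the congruences $\Gamma_G = L_{G^T} \circ R_G$ and the opposite $X \mapsto -X = L_{-I_n}(X)$ are finite compositions of these maps. Since compositions of isometries are isometries, Part 1 follows at once.

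For Part 2, the idea is to exhibit, for each $A$ in the component under consideration, the candidate map $\psi_A := R_A \circ L_A \circ \varphi$ and check both defining properties of a symmetry at $A$. I would first observe that $L_A$ and $R_A$ commute (since matrix multiplication is associative), so $\psi_A = L_A \circ R_A \circ \varphi$ and explicitly $\psi_A(X) = A X^{-1} A$. Being a composition of isometries from Part 1, $\psi_A$ is an isometry of $(GL_n,g)$, and $\psi_A(A) = A A^{-1} A = A$. For the differential, using $(DL_A)_{\cdot} = L_A$, $(DR_A)_{\cdot} = R_A$ and $(D\varphi)_A(V) = -A^{-1}VA^{-1}$ (recorded in the preceding Remark), the chain rule yields $(D\psi_A)_A(V) = A(-A^{-1}VA^{-1})A = -V$, so $(D\psi_A)_A = -\mathrm{id}$ on $T_A(GL_n)$.

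The remaining point is to verify that $\psi_A$ restricts to a self-isometry of the connected component containing $A$: a one-line determinant computation gives $\det(\psi_A(X)) = \det(A)^2/\det(X)$, whose sign matches the sign of $\det(X)$, so both $GL_n^+$ and $GL_n^-$ are stable under $\psi_A$ (regardless of which component $A$ belongs to). Specializing to $A = I_n$ gives $\psi_{I_n} = R_{I_n} \circ L_{I_n} \circ \varphi = \varphi$. I do not anticipate a genuine obstacle; the entire argument consists of assembling already-proved isometries and carrying out two short computations (the fixed-point check and the differential), both relying on the explicit formula for $D\varphi$ derived in the preceding Remark.
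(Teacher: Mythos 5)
Your proposal is correct and follows essentially the same route as the paper: Part 1 is assembled from the isometries established in Proposition \ref{homog} and the preceding Remark, and Part 2 verifies that $\psi_A(X)=AX^{-1}A$ fixes $A$ and has differential $-\mathrm{id}$ at $A$ via the formula for $(D\varphi)_A$. Your extra determinant check that $\psi_A$ preserves each connected component is a small refinement the paper leaves implicit, but it does not change the argument.
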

\begin{proof}
Part (1) has been proved in the previous remark. For (2): for every $A \in GL_n^+$ (or in $GL_n^-$), $\psi_A$ is an isometry of $(M_n,g)$. We have $\psi_A(X) = AX^{-1}A$, therefore $\psi_A(A)=A$ and $(D\psi_A)_A=R_A \circ L_A \circ D\varphi_A$, hence $(D\psi_A)_A(W)= (R_A \circ L_A)(-A^{-1}WA^{-1})= -W$, so $(D\psi_A)_A = - id_{T_A(GL_n)}$.
\end{proof}

\section{Geodesics in $(GL_n, g)$}

\begin{notations}\label{base_euclidean_derivative}
Let $P=(p^{ij}) \in GL_n$, where $p^{ij}$ indicates the the $(i,j)$-entry of $P$. We denote by $\{E_{ij}\}$, $1 \le i,j \le n$, the standard basis of $M_n$, where $E_{ij} \in M_n$ is the matrix whose entries are $0$ except for the $(i,j)$-entry which is $1$. After reordering, $\{E_{ij}\}$ can be rewritten as $\{E_\alpha\}$, $1 \le \alpha \le n^2$, just following the columns one after another. Hence we can write $P= \sum_{\alpha}p^{\alpha}E_{\alpha}$ with $p^{\alpha} \in \mathbb{R}$. The $p^{\alpha}$, $1 \le \alpha \le n^2$ are natural coordinates on the whole $GL_n$ and $(p^1, \cdots , p^{n^2})$ runs over an open subset of $\mathbb{R}^{n^2}$. $M_n$ is the tangent space to $GL_n$ at each point, hence we can identify $E_\alpha$ with $\dfrac{\partial \ }{\partial p^\alpha}$ for any $\alpha = 1, \cdots , n^2$.

Now if $X = \sum_{\alpha = 1}^{n^2} X^\alpha E_\alpha$, $Y = \sum_{\alpha = 1}^{n^2} Y^\alpha E_\alpha$ are tangent vector fields of class $C^\infty$ on $GL_n$, we can define a new tangent vector field of class $C^\infty$ on $GL_n$: the euclidean derivative of the field $Y$ along the field $X$ and indicated by $X(Y)$, by setting $X(Y) = \sum_{\alpha, \beta = 1}^{n^2} X^\alpha \dfrac{\partial Y^\beta}{\partial p^\alpha} E_\beta$.
\end{notations}

\begin{theorem}\label{Levi-Civita_geod_GL}
1) Let $\nabla$ be the Levi-Civita connection of $(GL_n,g)$. If $X$ and $Y$ are tangent vector fields of class $C^\infty$ on $GL_n$, then 
$$(\nabla_X Y)_{_P} = (X(Y))_{_P} -\dfrac{1}{2} (X_{_P}P^{-1}Y_{_P} + Y_{_P}P^{-1}X_{_P})$$ 
for any $P \in GL_n$, where $X(Y)$ is the euclidean derivative of $Y$ with respect to $X$.

2) Let $P=P(t)$ be a $C^\infty$-curve on $(GL_n, g)$, then $P$ is a geodesic if and only if 
$$\nabla_{\stackrel{.}{P}} \stackrel{.}{P} \ = \  \stackrel{..}{P} - \stackrel{.}{P} \stackrel{ }P^{-1} \stackrel{.}{P} \ = 0$$
where $\stackrel{.}{P}$ and $\stackrel{..}{P}$ are the first and the second derivative of $P$ with respect to $t$.

3) The geodesics of $(GL_n, g)$ are precisely the curves of the type: 
$$P(t)=Ke^{tC}$$ 
for any $C \in M_n$ and any $K \in GL_n$.

4) $(GL_n, g)$ is a geodesically complete Semi-Riemannian manifold.
\end{theorem}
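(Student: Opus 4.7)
My plan is to handle the four parts in order, with essentially all the real work concentrated in part~(1); parts~(2)--(4) then reduce to routine computations and to the standard existence and uniqueness theory for ordinary differential equations.

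For part~(1), I would let $\widetilde{\nabla}_X Y$ denote the right-hand side of the stated formula and verify the four axioms characterizing the Levi-Civita connection. Since $GL_n$ is an open subset of $M_n$, the tangent bundle is globally trivialized by the basis $\{E_\alpha\}$ of Notations~\ref{base_euclidean_derivative}, so the Lie bracket of vector fields coincides with $X(Y)-Y(X)$. The $C^\infty(GL_n)$-linearity in $X$ and the $\mathbb{R}$-linearity plus Leibniz rule in $Y$ are immediate from the formula, since the correction $-\tfrac{1}{2}(X_P P^{-1} Y_P + Y_P P^{-1} X_P)$ is $C^\infty$-linear in $Y$. The torsion-freeness is also transparent: that correction is symmetric in $(X,Y)$, so it cancels in $\widetilde{\nabla}_X Y - \widetilde{\nabla}_Y X$, leaving exactly $[X,Y]$. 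Metric compatibility is the one real computation: differentiating $g_P(Y,Z)=tr(P^{-1}Y_P P^{-1}Z_P)$ along $X$ by means of $D(P \mapsto P^{-1})_P(V) = -P^{-1}VP^{-1}$ produces four trace terms; expanding $g(\widetilde{\nabla}_X Y, Z)+g(Y,\widetilde{\nabla}_X Z)$ likewise produces four terms, and matching the two expressions is achieved by repeated use of the cyclic property of the trace. Once this is verified, uniqueness of the Levi-Civita connection gives $\widetilde{\nabla} = \nabla$.

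The remaining parts are short. For part~(2) I would specialize $X = Y = \dot P$ along the curve: the Euclidean derivative of $\dot P$ along $\dot P$ is simply $\ddot P$, and the symmetric correction collapses to the single term $\dot P\, P^{-1}\,\dot P$, giving the claimed geodesic equation. For part~(3) I would substitute $P(t)=Ke^{tC}$ into that equation: since $\dot P = Ke^{tC}C = PC$ and hence $\ddot P = PC^2 = \dot P\, P^{-1}\,\dot P$, the curve is indeed a geodesic. Conversely, given initial data $P(0)=K \in GL_n$ and $\dot P(0)=V\in M_n$, the curve $Ke^{tK^{-1}V}$ satisfies the same initial conditions and the same second-order ODE, so by uniqueness of solutions it coincides with the geodesic in question. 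For part~(4), each such maximal geodesic is defined for every $t \in \mathbb{R}$ and remains in $GL_n$, because $Ke^{tC}$ is always invertible; hence $(GL_n, g)$ is geodesically complete.

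The main obstacle I anticipate is the verification of metric compatibility in part~(1): the calculation itself is straightforward, but it requires careful bookkeeping, since after applying the Leibniz rule and cyclically permuting factors under the trace one must match several four-factor expressions of the shape $tr(P^{-1}\ast P^{-1}\ast P^{-1}\ast)$ that originate from quite different places in the two sides of the identity. Once this matching is carried out, everything else in the theorem is direct substitution and a standard appeal to ODE theory.
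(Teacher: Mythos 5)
Your argument is correct, but the route you take in part (1) is genuinely different from the paper's. The paper works entirely in the global coordinates $(p^1,\dots,p^{n^2})$: it differentiates $g_{\alpha\beta}=tr(P^{-1}E_\alpha P^{-1}E_\beta)$, plugs the result into the classical formula $\Gamma_{\alpha\beta}^{\gamma}=\sum_\delta \tfrac{g^{\gamma\delta}}{2}(g_{\alpha\delta,\beta}+g_{\beta\delta,\alpha}-g_{\alpha\beta,\delta})$, watches four of the six trace terms cancel, and then reassembles $\nabla_XY=X(Y)+\sum X^\alpha Y^\beta\Gamma_{\alpha\beta}^\gamma E_\gamma$ using the identity $V=\sum_{\alpha,\beta}g^{\alpha\beta}g(V,E_\beta)E_\alpha$ to eliminate the inverse metric. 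You instead posit the closed-form candidate $\widetilde{\nabla}$ and verify the characterizing axioms, with torsion-freeness falling out of the symmetry of the correction term and metric compatibility reducing to a cyclic-trace bookkeeping exercise; uniqueness of the Levi-Civita connection then finishes it. Your approach avoids ever introducing $g^{\alpha\beta}$ and is arguably cleaner, at the price of having to guess the formula in advance rather than derive it. In part (3) there is also a small divergence: the paper integrates the ODE directly (rewriting it as $\tfrac{d}{dt}(P^{-1}\dot P)=0$, so $P^{-1}\dot P=C$ and then $\tfrac{d}{dt}(Pe^{-tC})=0$), whereas you verify that $Ke^{tK^{-1}V}$ solves the equation with the prescribed initial data and appeal to ODE uniqueness. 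Both are sound; the paper's integration has the minor advantage of producing the form $Ke^{tC}$ constructively rather than requiring it as an ansatz. Parts (2) and (4) are handled identically in both treatments.
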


\begin{proof}
In this proof we generalize the arguments developed by \cite{MoZ2011} \S 3, in case of positive definite  matrices.

We indicate  by $g_{\alpha \beta} = g_{\alpha \beta}(p^1, \cdots , p^{n^2}) = g_P(E_\alpha, E_\beta)$ the components of the metric tensor $g$ with respect to the euclidean coordinates $(p^1, \cdots , p^{n^2})$ and by $g^{\alpha \beta} = g^{\alpha \beta}(p^1, \cdots , p^{n^2})$ the entries of the inverse of the matrix $(g_{\alpha \beta})$ which is invertible at any point, because the metric $g$ is nondegenerate on $GL_n$. Hence we have: $\sum_{\beta =1}^{n^2}g_{\alpha \beta}g^{\beta \gamma} = \delta _\alpha^\gamma$ (Kronecker symbol).

Now let $\nabla$ be the Levi-Civita connection associated to the Semi-Riemannian metric $g$. To simplify the notations we omit the index $P$.
We have: $\nabla_{E_\alpha} E_\beta = \sum_{\gamma = 1}^{n^2} \Gamma_{\alpha \beta}^{\gamma}E_\gamma$, where the Christoffel symbols $\Gamma_{\alpha \beta}^{\gamma}$ can be expressed as $\Gamma_{\alpha \beta}^{\gamma} = \sum_{\delta =1}^{n^2} \dfrac{g^{\gamma \delta}}{2} (g_{\alpha \delta, \beta} + g_{\beta \delta, \alpha} - g_{\alpha \beta , \delta})$ with $g_{\alpha \beta , \delta} = \dfrac{\partial g_{\alpha \beta}}{\partial p^{\delta}}$ for any $\alpha, \beta, \delta \in \{ 1, \cdots , n^2 \}$.

\medskip

\emph{Claim 1.}
For any $\alpha, \beta, \gamma \in \{ 1, \cdots , n^2 \}$ we have: 

$$\Gamma_{\alpha \beta}^\gamma = - \dfrac{1}{2} \sum_{\delta=1}^{n^2} g^{\gamma \delta} \{tr(P^{-1}E_\alpha P^{-1}E_\beta P^{-1}E_\delta) + tr(P^{-1}E_\beta P^{-1}E_\alpha P^{-1}E_\delta)\}.$$

\medskip

Indeed, remembering that $\dfrac{\partial}{\partial p^{\delta}}(P^{-1}) = - P^{-1} E_\delta P^{-1}$, standard computations show that 
\begin{multline*}
g_{\alpha \beta , \delta} = \dfrac{\partial \ }{\partial p^{\delta}} (tr(P^{-1}E_\alpha P^{-1}E_\beta)) = \\
- tr(P^{-1} E_\alpha P^{-1} E_\beta P^{-1} E_\delta) - tr(P^{-1} E_\beta P^{-1} E_\alpha P^{-1} E_\delta).
\end{multline*}

Hence 
\begin{multline*}
\Gamma_{\alpha \beta}^\gamma = \sum_{\delta =1}^{n^2} \dfrac{g^{\gamma \delta}}{2} \{- tr(P^{-1} E_\alpha P^{-1} E_\delta P^{-1} E_\beta) - tr(P^{-1} E_\delta P^{-1} E_\alpha P^{-1} E_\beta) \\
\shoveright{-tr(P^{-1} E_\delta P^{-1} E_\beta P^{-1} E_\alpha) - tr(P^{-1} E_\beta\ P^{-1} E_\delta P^{-1} E_\alpha) } \\
\shoveright{+tr(P^{-1} E_\alpha P^{-1} E_\beta P^{-1} E_\delta) + tr(P^{-1} E_\beta P^{-1} E_\alpha P^{-1} E_\delta)\} =} \\
 - \dfrac{1}{2} \sum_{\delta =1}^{n^2} g^{\gamma \delta} \{tr(P^{-1} E_\alpha P^{-1} E_\beta P^{-1} E_\delta) +tr(P^{-1} E_\beta P^{-1} E_\alpha P^{-1} E_\delta)\}
\end{multline*} 

as predicted.

\medskip

An elementary computation of linear algebra allows us to get also 

\medskip

\emph{Claim 2.} Let $V= \sum_{\alpha = 1}^{n^2} V^\alpha E_\alpha$ be a vector field on $GL_n$. 

Then for any $\alpha = 1, \cdots , n^2$ we have $V^\alpha = \sum_{\beta = 1}^{n^2} g^{\alpha \beta} g(V, E_\beta)$ and so 

$V = \sum_{\alpha, \beta = 1}^{n^2} g^{\alpha \beta} g(V, E_\beta) E_\alpha$.

\medskip

Now let $X = \sum_{\alpha = 1}^{n^2} X^\alpha E_\alpha$, $Y = \sum_{\beta = 1}^{n^2} Y^\beta E_\beta$ be as in (1). Hence:

\begin{multline*}
\nabla_X Y = \sum_{\alpha, \beta = 1}^{n^2} X^\alpha \nabla_{E_\alpha}(Y^\beta E_\beta) = \\
\sum_{\alpha, \beta = 1}^{n^2} X^\alpha \dfrac{\partial Y^\beta}{\partial p^\alpha} E_\beta + \sum_{\alpha, \beta = 1}^{n^2} X^\alpha Y^\beta \nabla_{E_\alpha} E_\beta = X(Y) + \sum_{\alpha, \beta, \gamma=1}^{n^2} X^\alpha Y^\beta \Gamma_{\alpha \beta}^\gamma E_\gamma
\end{multline*}

which by Claim 1 is equal to
\begin{multline*}
X(Y)- \dfrac{1}{2} \sum_{\alpha, \beta, \gamma, \delta = 1}^{n^2} X^\alpha Y^\beta g^{\gamma \delta} \{tr(P^{-1} E_\alpha P^{-1} E_\beta P^{-1} E_\delta) +\\
 \shoveright{tr(P^{-1}E_\beta P^{-1} E_\alpha P^{-1} E_\delta)\}E_\gamma =}\\
X(Y) - \dfrac{1}{2} \sum_{\gamma, \delta = 1}^{n^2} g^{\gamma \delta} \{tr (P^{-1} X P^{-1} Y P^{-1} E_\delta) + tr(P^{-1} Y P^{-1}X P^{-1}E_\delta)\} E_\gamma =\\
X(Y) - \dfrac{1}{2} \sum_{\gamma, \delta = 1}^{n^2} g^{\gamma \delta} \{tr(P^{-1}(XP^{-1}Y + YP^{-1}X)P^{-1}E_\delta)\} E_\gamma =\\
X(Y) -\dfrac{1}{2} \sum_{\gamma, \delta = 1}^{n^2} g^{\gamma \delta} g_{_P}(XP^{-1}Y + YP^{-1}X, E	_\delta) E_\gamma .
\end{multline*}
This, by Claim 2, is $X(Y) - \dfrac{1}{2} (XP^{-1}Y + YP^{-1}X)$ and we conclude (1).

\smallskip

Now (2) follows from (1), because the euclidean derivative of $\stackrel{.}{P}$ with respect to $\stackrel{.}{P}$ is $\stackrel{..}{P}$.

\smallskip

From $\stackrel{..}{P} - \stackrel{.}{P} \stackrel{ }P^{-1} \stackrel{.}{P} \ = 0$ we get $P^{-1}\stackrel{..}{P} - P^{-1}\stackrel{.}{P} \stackrel{ }P^{-1} \stackrel{.}{P} \ = 0$, hence, remembering that $\dfrac{d}{dt}(P^{-1}) = - P^{-1} \stackrel{.}{P} P^{-1}$, we get $\dfrac{d}{dt}(P^{-1}\stackrel{.}{P}) = 0$,  so $P^{-1}\stackrel{.}{P} = C$, constant. Then $\stackrel{.}{P} = PC$, so $\stackrel{.}{P} e^{-tC} - PC \, e^{-tC} = 0$. For any constant matrix $X$ we have $\dfrac{d}{dt}(e^{tX}) = X e^{tX} = e^{tX} X$, so we deduce that $\dfrac{d}{dt}(Pe^{-tC}) = 0$, hence $P(t) e^{-tC} = K$, constant with $det(K) \ne 0$ and in conclusion $P(t)=Ke^{tC}$, as predicted in (3). Finally we get (4), because any maximal geodesic is clearly defined on the entire real line.
\end{proof}

\begin{remark}\label{rem_before_0_conn}
As in Notations \ref{base_euclidean_derivative}, let $X= \sum_{\alpha = 1}^{n^2} X^\alpha(p) \dfrac{\partial \ }{\partial p^\alpha}$ be a $C^\infty$-vector field on $GL_n$ and let us denote by $P= \sum_{\alpha = 1}^{n^2}p^\alpha E_\alpha$ (it can be also viewed as  a $C^\infty$-vector field on $GL_n$), then we have: $X(P) = X$.

Let us denote by $\mathcal{GL}_n$ the Lie algebra of $GL_n$.

\smallskip

Let $X_0 \in T_{I_n}(GL_n) = M_n$. The unique left-invariant vector field $X \in \mathcal{GL}_n$, assuming the value $X_0$ at the identity, is the field $X$ defined by $X_P = P X_0$ for any $P \in GL_n$. Then we get: $[X,Y]_{_P}= P(X_0Y_0 -Y_0X_0)$ for any $P \in GL_n$, where $X,Y \in \mathcal{GL}_n$ are such that $X_{I_n} = X_0$, $Y_{I_n} = Y_0$.

\smallskip

If $X,Y \in \mathcal{GL}_n$ are such that $X_{I_n} = X_0$, $Y_{I_n} = Y_0$, then $(X(Y))_{_P} = PX_0Y_0$ for any $P \in GL_n$.

Indeed, taking into account the previous facts, we have: 
$
(X(Y))_{_P} = PX_0(P)Y_0 = P X_0 Y_0
$.

Now by these facts and by Theorem \ref{Levi-Civita_geod_GL}, if $\nabla$ is the Levi-Civita connection of $(GL_n, g)$, we can get: $(\nabla_XY)_{_P} = (X(Y))_{_P} - \dfrac{1}{2} (X_P P^{-1}Y_P + Y_P P^{-1}X_P) = \dfrac{1}{2} [X,Y]_{_P}$ for any $X, Y \in \mathcal{GL}_n$ and any $P \in GL_n$. This allows to state the following
\end{remark}

\begin{corollary}\label{Cartan_0_Conn}
Let $\nabla$ be the Levi-Civita connection of $(GL_n, g)$. Then
$$
\nabla_X Y = \dfrac{1}{2} [X,Y] \in \mathcal{GL}_n
$$
for any $X, Y \in \mathcal{GL}_n$.

Hence $\nabla$ is the Cartan-Schouten $(0)$-connection of $GL_n$ (see \cite{Helg2001} p.148 and pp.549-550).
\end{corollary}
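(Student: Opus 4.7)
The plan is to derive the formula directly from the Levi-Civita expression in Theorem \ref{Levi-Civita_geod_GL}(1), specialized to left-invariant vector fields using the identities gathered in Remark \ref{rem_before_0_conn}.

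First I would fix $X, Y \in \mathcal{GL}_n$ and write $X_0 = X_{I_n}$, $Y_0 = Y_{I_n}$, so that $X_P = PX_0$ and $Y_P = PY_0$ for every $P \in GL_n$. The key simplification is that the factor $P^{-1}$ in the Levi-Civita formula exactly cancels the left-invariance factor: $X_P P^{-1} Y_P = (PX_0)P^{-1}(PY_0) = PX_0Y_0$, and similarly $Y_P P^{-1} X_P = PY_0X_0$. Meanwhile, by the computation in Remark \ref{rem_before_0_conn}, the euclidean derivative satisfies $(X(Y))_P = PX_0Y_0$.

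Substituting into $(\nabla_X Y)_P = (X(Y))_P - \tfrac{1}{2}(X_P P^{-1} Y_P + Y_P P^{-1} X_P)$ yields
\[
(\nabla_X Y)_P = PX_0Y_0 - \tfrac{1}{2}(PX_0Y_0 + PY_0X_0) = \tfrac{1}{2}P(X_0Y_0 - Y_0X_0),
\]
which is exactly $\tfrac{1}{2}[X,Y]_P$ by the bracket formula recalled in the remark. Since this vector field is of the form $P \mapsto P \cdot (\text{constant matrix})$, it is itself left-invariant, so $\nabla_X Y \in \mathcal{GL}_n$.

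For the final identification with the Cartan--Schouten $(0)$-connection, I would simply invoke its definition from \cite{Helg2001} pp.~148 and 549--550: on a Lie group, the $(0)$-connection is characterized by $\nabla_X Y = \tfrac{1}{2}[X,Y]$ on left-invariant fields, which is exactly what we have just proved. No real obstacle is expected here; the entire argument is an algebraic simplification, and in fact the preceding remark already carries out the chain of equalities, so the proof is essentially a citation of that remark together with the Helgason reference.
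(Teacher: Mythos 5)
Your proposal is correct and follows exactly the paper's own route: the authors carry out precisely this computation in Remark \ref{rem_before_0_conn} (specializing the Levi-Civita formula of Theorem \ref{Levi-Civita_geod_GL}(1) to left-invariant fields via $X_P = PX_0$, $(X(Y))_P = PX_0Y_0$, and $[X,Y]_P = P(X_0Y_0 - Y_0X_0)$) and then state the corollary with the Helgason citation. Nothing is missing.
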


\begin{corollary}\label{Cor_to_Levi-Civita_geod_GL}
1) The curve $P(t)=Ke^{tC}$, $C \in M_n$ and $K \in GL_n$, is the unique geodesic of $(GL_n, g)$ emaning from $K$ with velocity $KC$ at $t=0$ and vice versa the unique geodesic of $(GL_n, g)$ emaning from $K \in GL_n$ with velocity $S \in M_n$ at $t=0$ is: 
$P(t)= K  exp(tK^{-1} S)$.

2) Let $K_0, K_1 \in GL_n$; a geodesic arc joining  $K_0$ and $K_1$ in  $GL_n$ is any geodesic $\gamma: [0,1] \to GL_n$ such that $\gamma(0) = K_0$, $\gamma(1) = K_1$. Then there exists a geodesic arc in $(GL_n, g)$ joining $K_0, K_1$ if and only if the exponential equation $exp(X)= K_0^{-1}K_1$ has a real solution $C$, moreover the real solutions correspond bijectively to the geodesics of $(GL_n, g)$ starting from $K_0$ at $t=0$ and passing through $K_1$ at $t=1$.

3) If $K$ is a positive definite symmetric real matrix and $K^{\frac{1}{2}}$ denotes its unique positive definite square root matrix and if $S \in S_n$, then the unique geodesic emaning from $K$ with velocity $S$ at $t=0$ is: $P(t)=K^{\frac{1}{2}} exp(t K^{-\frac{1}{2}}S K^{-\frac{1}{2}}) K^{\frac{1}{2}}$ (see for instance \cite{MoZ2011} thm.3.5).
\end{corollary}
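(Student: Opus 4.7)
The plan is to read off all three assertions directly from Theorem \ref{Levi-Civita_geod_GL}(3), which already identifies every geodesic of $(GL_n,g)$ as a curve of the form $P(t)=Ke^{tC}$ with $K\in GL_n$ and $C\in M_n$, combined with the standard fact that a geodesic is uniquely determined by its initial position and velocity.

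For part (1), I would compute $P(0)=K$ and $\dot P(0)=KC$, so that $Ke^{tC}$ is the geodesic through $K$ with initial velocity $KC$; uniqueness is automatic. The converse formulation is then immediate: prescribing $S\in M_n$ as the initial velocity at $K$ amounts to the equation $KC=S$, solved uniquely by $C=K^{-1}S$, yielding $P(t)=K\exp(tK^{-1}S)$. Part (2) drops out by applying (1) at $K_0$: a geodesic arc from $K_0$ to $K_1$ on $[0,1]$ is exactly a curve $\gamma(t)=K_0e^{tC}$ with $\gamma(1)=K_1$, i.e.\ $\exp(C)=K_0^{-1}K_1$; distinct real solutions $C$ yield distinct initial velocities $K_0C$ and hence distinct geodesics by the uniqueness part of (1), establishing the claimed bijection.

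For part (3), the crux is the conjugation identity
\[
K^{-1}S=K^{-1/2}\bigl(K^{-1/2}SK^{-1/2}\bigr)K^{1/2},
\]
obtained at once by writing $K^{-1}=K^{-1/2}K^{-1/2}$. Since $\exp(ABA^{-1})=A\exp(B)A^{-1}$ for the matrix exponential, this produces $\exp(tK^{-1}S)=K^{-1/2}\exp\bigl(tK^{-1/2}SK^{-1/2}\bigr)K^{1/2}$; substituting into $P(t)=K\exp(tK^{-1}S)$ from part (1) and writing $K=K^{1/2}K^{1/2}$ on the left yields the required symmetric form. No serious obstacle is anticipated: the only ingredient beyond Theorem \ref{Levi-Civita_geod_GL} is this standard conjugation property of $\exp$, and the symmetry hypothesis $S\in S_n$ plays no role in the algebraic derivation itself but is the natural setting in which $K^{-1/2}SK^{-1/2}$ is again symmetric and the formula specializes to the known expression for the Riemannian cone of positive definite matrices.
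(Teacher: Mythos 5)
Your proposal is correct and follows essentially the same route as the paper: read parts (1) and (2) off Theorem \ref{Levi-Civita_geod_GL}(3) together with uniqueness of geodesics with prescribed initial position and velocity, and obtain part (3) by writing $K=K^{\frac{1}{2}}K^{\frac{1}{2}}$ and using the conjugation property of $\exp$. No substantive differences from the paper's own argument.
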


\begin{proof}
The first part of (1) follows by remarking that $P(0)=K$ and $\stackrel{.}{P}(0)= KC$ is the velocity at $t=0$.

If $K \in GL_n$ and $S \in M_n$ the unique geodesic emaning from $K$ with velocity $S$ (for existence and uniqueness, remember for instance \cite{O'N1983} p.68 lemma 22) can be only the above curve: this completes (1).

To prove (2), assume that such a geodesic arc, $P(t)$, exists. By Theorem \ref{Levi-Civita_geod_GL}, $P(t)= K_0 exp(tC)$ for some $C \in M_n$ and so $K_1 = P(1)= K_0 exp(C)$. Hence we can conclude that $exp(C) = K_0^{-1} K_1$. 
For the converse suppose that $C$ is a real matrix with $exp(C)= K_0^{-1}K_1$. By part (3) of Theorem \ref{Levi-Civita_geod_GL}, the curve $P(t)=K_0 exp(tC)$ (the unique geodesic emaning from $K_0$ with velocity $K_0C$ at $t=0$) passes through $K_1$ too, because $P(1) = K_0 exp(C) = K_0 K_0^{-1}K_1 = K_1$. 
We conclude that distinct solutions $C, C'$ of the previous exponential equation correspond to distinct geodesic arcs with prescribed endpoints: indeed the correspondig geodesic arcs have in $K_0$ velocities $K_0C$ and $K_0C'$ which must be distinct, otherwise $C=C'$.

Finally by means of standard properties of $exp$ we can write: 
\begin{multline*}
P(t) = K exp(tK^{-1} S) = \\
K^{\frac{1}{2}} K^{\frac{1}{2}} exp(tK^{-\frac{1}{2}} K^{-\frac{1}{2}}S K^{-\frac{1}{2}} K^{\frac{1}{2}}) = K^{\frac{1}{2}} exp(t K^{-\frac{1}{2}}S K^{-\frac{1}{2}}) K^{\frac{1}{2}}.
\end{multline*}
\end{proof}

\begin{theorem}\label{Culver_for_GL_n}
1) Let $K_0, K_1 \in GL_n$. Then there exists a geodesic arc of $(GL_n, g)$ joining $K_0$, $K_1$ if and only if 
each elementary divisor (Jordan block) of $K_0^{-1}K_1$ belonging to any (possible) negative eigenvalue occurs an even number of times; moreover the geodesic arc is unique if and only if all the eigenvalues of $K_0^{-1}K_1$ are positive real and no elementary divisor (Jordan block) of $K_0^{-1}K_1$ belonging to any eigenvalue appears more than once.

2) Assume that there is more of one geodesic arc of $(GL_n, g)$ joining $K_0$, $K_1$. Then there exists an infinity of such geodesic arcs, which are

(a) countable if $K_0^{-1}K_1$ has complex eigenvalues none of which belongs to more than one Jordan block and all (possible) real eigenvalues of $K_0^{-1}K_1$ are positive such that their Jordan blocks appear only once;

(b) uncountable (more precisely a countinuous) if $K_0^{-1}K_1$  has some negative real eigenvalue, or if it has some positive real eigenvalues belonging to Jordan blocks that appear more than once, or it has some complex conjugate eigenvalues belonging to more than one Jordan block.
\end{theorem}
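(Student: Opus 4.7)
The plan is to reduce the statement to the classical problem of existence, uniqueness and enumeration of real logarithms of a nonsingular real matrix, and then invoke Culver's theorem \cite{Culv1966} almost verbatim.

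First, by Corollary \ref{Cor_to_Levi-Civita_geod_GL}(2), geodesic arcs of $(GL_n, g)$ joining $K_0$ to $K_1$ are in bijective correspondence with real solutions $C \in M_n$ of the matrix exponential equation $\exp(X) = K_0^{-1}K_1$. Setting $A := K_0^{-1}K_1 \in GL_n$, the entire question becomes: when does $A$ admit a real logarithm, when is such a logarithm unique, and how many logarithms are there when it fails to be unique?

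Next, I would appeal directly to the results of \cite{Culv1966}. Culver proves that a nonsingular real matrix $A$ has a real logarithm if and only if each Jordan block of $A$ associated with a negative eigenvalue appears an even number of times, so that such blocks can be paired to form real $2\times 2$ Jordan blocks of a logarithm corresponding to complex conjugate eigenvalues; and that this real logarithm is unique if and only if every eigenvalue of $A$ is positive real and no Jordan block of $A$ is repeated. Via the bijection above this yields part (1) at once.

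For part (2), the two mechanisms producing multiple logarithms in \cite{Culv1966} must be distinguished. The first is branch ambiguity: for any complex conjugate pair of eigenvalues, replacing the imaginary part of the principal logarithm by an integer $2\pi$-translate on one of the two eigenvalues produces a distinct real logarithm; when no Jordan block is repeated and all real eigenvalues are positive, these discrete choices exhaust the set of logarithms and one obtains only countably many, giving case (2)(a). The second mechanism is continuous ambiguity: if $A$ has a negative eigenvalue (whose Jordan blocks must then be grouped into pairs) or if some Jordan block of $A$ is repeated, one can either vary the pairing continuously or conjugate a given logarithm by nontrivial elements of the centralizer of $A$ acting on the isotypic component of the repeated block, producing a positive-dimensional family of logarithms and hence a continuum of geodesic arcs, which is case (2)(b). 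The main obstacle is not mathematical but expository: all the substantive content is contained in \cite{Culv1966}, and the care required is to ensure that the trichotomy ``unique / countably many / uncountably many'' is read off correctly from Culver's classification of the Jordan data of $A$.
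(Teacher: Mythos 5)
Your proposal is correct and follows essentially the same route as the paper: reduce via Corollary \ref{Cor_to_Levi-Civita_geod_GL}(2) to the real solvability of $\exp(X)=K_0^{-1}K_1$ and then read off existence, uniqueness, and the countable/uncountable dichotomy from Culver's theorems 1, 2 and his corollary. The extra detail you give on the two mechanisms of non-uniqueness (branch shifts versus centralizer/pairing deformations) is a correct gloss on Culver's corollary but is not needed beyond the citation.
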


\begin{proof}
The point (2) of Corollary \ref{Cor_to_Levi-Civita_geod_GL} translates the existence of geodesic arcs in $(GL_n, g)$, joining $K_0$ and $K_1$, into the existence of real solutions of the esponential equation $exp(X)= K_0^{-1}K_1$. The study of the equation $exp(X) = M$, $M \in M_n$ has been accomplished by W. J. Culver in \cite{Culv1966} and it depends on Jordan form of $K_0^{-1} K_1$.
So (1) translates the existence of a real solution of the previous exponential equation (\cite{Culv1966} thm.1) and characterizes its uniqueness (\cite{Culv1966} thm.2), while (2) describes the cases of its nonuniqueness (\cite{Culv1966} cor.).
\end{proof}

\begin{remark}
The condition in (1) of Theorem \ref{Culver_for_GL_n} implies $\det(K_0^{-1}K_1) > 0$. The positivity of this determinant is equivalent to say that $K_0$, $K_1$ belong both to $GL_n^+$ or to $GL_n^-$, which is of course obvious for the existence of a geodesic arc between them. Then the point (1) of the previous theorem points out that this fact is only necessary, but not sufficient, for the existence of a geodesic arc between $K_0$ and $K_1$.

When $K_0 = I_n$, then $K_0^{-1}K_1 = K_1$. Hence Jordan form of $K_0^{-1}K_1$ is nothing but Jordan form of $K_1$. Note that we can always reduce to this case, because there are some isometries (for instance the left translation $L_{K_0^{-1}}$) mapping $K_0$ to $I_n$.

The next corollaries follow directly from Theorem \ref{Culver_for_GL_n}.
\end{remark}

\begin{corollary}\label{Culver_for_diag}
Let $K_0, K_1 \in GL_n$ and assume that $K_0^{-1}K_1$ is similar to a diagonal real matrix $diag(\lambda_1, \cdots , \lambda_n)$.

There exists a geodesic arc in $(GL_n, g)$ joining $K_0$ and $K_1$ if and only if any (possible) negative $\lambda_i$ appears an even number of times.

There is a unique geodesic arc in $(GL_n, g)$ joining $K_0$ and $K_1$ if and only if $\lambda_1, \cdots , \lambda_n$ are positive and distinct.

Assume that there is more than one geodesic arc in $(GL_n, g)$ joining $K_0$ and $K_1$, then there exists a continuous of such geodesic arcs and there is a negative $\lambda_i$ (which appears an even number of times) or there is a positive $\lambda_j$ which appears more than one time.
\end{corollary}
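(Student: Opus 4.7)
My plan is to simply specialize Theorem \ref{Culver_for_GL_n} to the diagonalizable case and unpack what each of its Jordan-theoretic conditions becomes. The point is that when $K_0^{-1}K_1$ is similar to $\mathrm{diag}(\lambda_1,\ldots,\lambda_n)$ with all $\lambda_i \in \mathbb{R}$, its Jordan canonical form consists entirely of $1\times 1$ blocks, one for each $\lambda_i$, and there are no complex eigenvalues.

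First, for existence: in Theorem \ref{Culver_for_GL_n}(1) the condition is that every Jordan block attached to a negative eigenvalue appears an even number of times. In the diagonal setting each occurrence of $\lambda_i$ corresponds to exactly one $1\times 1$ block with that eigenvalue, so the condition collapses to the requirement that each negative value among the $\lambda_i$'s occurs an even number of times. This gives the first assertion.

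Next, for uniqueness: Theorem \ref{Culver_for_GL_n}(1) requires all eigenvalues of $K_0^{-1}K_1$ to be positive real, and no Jordan block to appear more than once. Since our eigenvalues are already real, the first clause becomes positivity of every $\lambda_i$; since all blocks are $1\times 1$, two equal blocks means two equal $\lambda_i$'s, so the second clause becomes distinctness of the $\lambda_i$'s. Conversely, if $\lambda_1,\ldots,\lambda_n$ are positive and distinct, the hypothesis of uniqueness in the cited theorem is satisfied.

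Finally, for the multiplicity dichotomy: since $K_0^{-1}K_1$ has only real eigenvalues, case (2)(a) of Theorem \ref{Culver_for_GL_n} (which demands complex eigenvalues) is vacuous, so whenever more than one geodesic arc exists we must fall into case (2)(b), giving a continuum. The conditions listed in (2)(b) then reduce, via the $1\times 1$ block translation, to: either some $\lambda_i$ is negative (and by the existence part it then appears an even number of times), or some positive $\lambda_j$ belongs to a repeated $1\times 1$ block, i.e., appears more than once. No step presents any real obstacle; the whole content is a dictionary translation between Jordan-block statements and diagonal-entry statements, and the proof will consist of writing out this translation in a few lines.
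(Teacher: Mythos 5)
Your proposal is correct and matches the paper's own treatment: the paper gives no separate argument for this corollary, stating only that it ``follows directly from Theorem \ref{Culver_for_GL_n},'' which is precisely the dictionary translation from Jordan blocks to diagonal entries that you carry out. Nothing further is needed.
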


\begin{corollary}\label{example_n=2-3}
Let $K_0$, $K_1$ be matrices both either in $GL_n^+$ or in $GL_n^-$ (so $K_0^{-1}K_1 \in GL_n^+$).

\smallskip

Case $n=2$. 

There exists a unique geodesic arc joining $K_0$, $K_1$ if and only if all eigenvalues of $K_0^{-1}K_1$ are real positive and $K_0$, $K_1$ are linearly independent matrices.

There are countably many geodesic arcs joining $K_0$, $K_1$ if and only if the eigenvalues of $K_0^{-1}K_1$ are not real.

There is an uncountable family of geodesic arcs joining $K_0$, $K_1$ if and only if they are linearly dependent matrices.

In any other case there is no geodesic arc joining $K_0$, $K_1$.

\smallskip

Case $n=3$. 

There exists a unique geodesic arc joining $K_0$, $K_1$ if and only if either all eigenvalues of $K_0^{-1}K_1$ are real positive and distinct or they are real positive and $K_0^{-1}K_1$ is not diagonalizable.

There are countably many geodesic arcs joining $K_0$, $K_1$ if and only if $K_0^{-1}K_1$ has a positive eigenvalue and the others are not real.

There is an uncountable family of geodesic arcs joining $K_0$, $K_1$ if and only if $K_0^{-1}K_1$ is diagonalizable over $\mathbb{R}$ and at least two eigenvalues are equal.

In any other case there is no geodesic arc joining $K_0$, $K_1$.
\end{corollary}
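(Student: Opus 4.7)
The plan is to apply Theorem~\ref{Culver_for_GL_n} (via Corollary~\ref{Cor_to_Levi-Civita_geod_GL}) by enumerating the finitely many real Jordan forms of $M := K_0^{-1}K_1$ compatible with the constraint $\det(M) > 0$, which is forced by the hypothesis that $K_0$ and $K_1$ lie in the same connected component of $GL_n$. In each subcase I read off existence, uniqueness, countability or uncountability from that theorem (or from Corollary~\ref{Culver_for_diag} in the diagonalizable cases).

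For $n=2$, the possible real Jordan forms of $M$ with $\det(M)>0$ are: (i)~two real eigenvalues of the same sign with $M$ diagonalizable; (ii)~a single repeated real eigenvalue with a nontrivial $2\times 2$ Jordan block; or (iii)~a pair of complex conjugate eigenvalues. Case (iii) gives countably many geodesic arcs by (2)(a) of Theorem~\ref{Culver_for_GL_n}. In (i) with distinct eigenvalues: both positive yields a unique arc; both negative yields no arc, since each negative Jordan block appears once (odd multiplicity). In (ii): positive $\lambda$ gives a unique arc, negative $\lambda$ gives none. Finally, in (i) with coinciding eigenvalues one has $M = \lambda I_2$; here the block $J_1(\lambda)$ is repeated twice, so (2)(b) yields uncountably many arcs for either sign of $\lambda$. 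The identification $M = \lambda I_2 \iff K_1 = \lambda K_0 \iff K_0, K_1$ linearly dependent (using invertibility of $K_0$) converts the list into the four clauses of the statement.

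For $n=3$, the constraint $\det(M)>0$ forces an even number of negative real eigenvalues (zero or two), so the eigenvalue multisets are: three positive reals, one positive and two negative reals, or one positive real together with a complex conjugate pair. In the complex family only $1 \times 1$ blocks are possible over $\mathbb{C}$ and each appears once, so (2)(a) gives countably many arcs. In the all-positive family I tabulate the possible Jordan structures: $J_1(\lambda) \oplus J_1(\mu) \oplus J_1(\nu)$ with $\lambda, \mu, \nu$ distinct (unique); the non-diagonalizable structures $J_3(\lambda)$, $J_2(\lambda) \oplus J_1(\mu)$ with $\lambda \ne \mu$, and $J_2(\lambda) \oplus J_1(\lambda)$ (each Jordan block appearing only once, hence unique); and the diagonalizable-with-repetition structures $\lambda I_3$ and $J_1(\lambda) \oplus J_1(\lambda) \oplus J_1(\mu)$ (a $1 \times 1$ block repeated, hence uncountable by (2)(b)). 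In the one-positive-two-negative family, the only Jordan structure in which the negative eigenvalue does not carry a block of odd multiplicity is $J_1(\lambda) \oplus J_1(\lambda) \oplus J_1(\mu)$ with $\lambda < 0 < \mu$; this gives uncountably many arcs, while every other substructure (distinct negatives, or a $J_2$ block on the negative eigenvalue) gives no geodesic arc at all. Reorganising these outcomes by the trichotomy ``all real positive'', ``some complex'', ``diagonalizable with repetition'' produces exactly the four clauses for $n=3$.

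The main obstacle is careful bookkeeping of what ``Jordan block appears more than once'' means in Theorem~\ref{Culver_for_GL_n}: a Jordan block is an ordered pair (eigenvalue, size), so for instance $J_2(\lambda) \oplus J_1(\lambda)$ has each of its two distinct Jordan blocks appearing only once despite the repeated eigenvalue, and therefore falls under the uniqueness clause rather than the non-uniqueness one. Once this convention is used consistently and the exceptional $n=3$ subcase with an even Jordan-block multiplicity on a negative eigenvalue is singled out, the argument reduces to a straightforward finite enumeration.
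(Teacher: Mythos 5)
Your proposal is correct and follows exactly the route the paper intends: the paper gives no explicit proof of this corollary, stating only that it ``follows directly from Theorem \ref{Culver_for_GL_n}'', and your case-by-case enumeration of the Jordan structures of $K_0^{-1}K_1$ compatible with $\det(K_0^{-1}K_1)>0$ is precisely the omitted verification, including the delicate point that $J_2(\lambda)\oplus J_1(\lambda)$ counts as having no repeated elementary divisor and hence falls under the uniqueness clause.
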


\begin{remark}\label{log_Jordan}
Assume now that $K_0, K_1 \in GL_n$ and that there is a unique geodesic arc joining them (remember Theorem \ref{Culver_for_GL_n}). We want to write down explicitally this geodesic arc.

Let $J_k(\lambda) = \lambda I_k + N_k$ be the Jordan block of order $k$ and eigenvalue $\lambda$ with $N_k$ the upper-triangular matrix whose entry $(i,j)$ is $\delta_{i+1,j}$. Standard computations (for instance on formal series of matrices) show that, if $\lambda \in \mathbb{R}$, $\lambda > 0$, the unique real logarithm matrix of $J_k(\lambda)$ (i.e. the unique real solution of $exp(Y) = J_k(\lambda)$) is 

$$
Y = LOG(J_k(\lambda)) = (log \lambda) I_k + \sum_{i=1}^{k-1} \dfrac{(-1)^{i+1}}{i \ \lambda^i} N_k^i
$$

where $log \lambda$ is the real natural logarithm of $\lambda$.

For any $t \in \mathbb{R}$ we have 

$$
exp(t LOG(J_k(\lambda))) = \lambda^t (I_k + \sum_{s=1}^{k-1} \binom{t}{s} \dfrac{N_k^s}{\lambda^s})
$$

where 
$$\binom{t}{s} = \dfrac{t(t-1) \cdots (t-s+1)}{s!}$$ 
and we set 
$$
J_k(\lambda)^t =(\lambda I_k + N_k)^t = \lambda^t (I_k + \sum_{s=1}^{k-1} \binom{t}{s} \dfrac{N_k^s}{\lambda^s}).
$$

Now let $X \in GL_n$ be a matrix such that $X= C^{-1} diag(J_{k_1}(\lambda_1), \cdots , J_{k_p}(\lambda_p))C$, with $C \in GL_n$, $\lambda_1, \cdots , \lambda_p > 0$ and $(\lambda_i, k_i) \ne (\lambda_j, k_j)$ as soon as $i \ne j$. Then the unique real logarithm of $X$ can be written as 

$$
LOG(X) = C^{-1} diag(LOG (J_{k_1}(\lambda_1)), \cdots , LOG (J_{k_p}(\lambda_p)))C.
$$

For any $t \in \mathbb{R}$ we pose $X^t = exp(t LOG(X))$ and so we get

$$
X^t = C^{-1} diag(J_{k_1}(\lambda_1)^t, \cdots , J_{k_p}(\lambda_p)^t)C.
$$

Taking into account \ref{Cor_to_Levi-Civita_geod_GL} we can state the following
\end{remark}

\begin{proposition}\label{expression_unique_geodesic_arc}
If there is a unique geodesic arc joining $K_0, K_1 \in GL_n$, then it can be written as $\gamma (t) = K_0 (K_0^{-1}K_1)^t$.
\end{proposition}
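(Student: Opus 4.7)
The plan is to combine the translation of the geodesic problem into the exponential equation from Corollary \ref{Cor_to_Levi-Civita_geod_GL}(2) with the explicit construction of the real logarithm of a matrix carried out in Remark \ref{log_Jordan}.

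First I would invoke Theorem \ref{Culver_for_GL_n}(1) to identify what uniqueness means algebraically: the assumption that there is exactly one geodesic arc joining $K_0$ and $K_1$ is equivalent to saying that all eigenvalues of $M := K_0^{-1}K_1$ are positive real and no Jordan block of $M$ is repeated. This puts $M$ in exactly the class of matrices for which Remark \ref{log_Jordan} constructs a well-defined unique real logarithm $LOG(M)$, and accordingly defines $M^t = \exp\bigl(t\, LOG(M)\bigr)$ for every $t \in \mathbb{R}$.

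Next I would apply Corollary \ref{Cor_to_Levi-Civita_geod_GL}(2): geodesic arcs from $K_0$ to $K_1$ correspond bijectively to real solutions $C$ of $\exp(X) = M$, and the associated geodesic is $P(t) = K_0 \exp(tC)$. Under the uniqueness hypothesis there is precisely one such $C$, and by the construction in Remark \ref{log_Jordan} this $C$ is exactly $LOG(M)$. Therefore the unique geodesic arc is
\begin{equation*}
\gamma(t) = K_0 \exp\bigl(t\, LOG(M)\bigr) = K_0\, M^t = K_0 (K_0^{-1}K_1)^t,
\end{equation*}
where the middle equality is just the definition of $M^t$ adopted in Remark \ref{log_Jordan}. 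The endpoint conditions $\gamma(0) = K_0$ and $\gamma(1) = K_0 M = K_1$ are immediate.

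There is essentially no obstacle here: once Theorem \ref{Culver_for_GL_n} has characterized uniqueness as the existence of a unique real logarithm and Remark \ref{log_Jordan} has exhibited that logarithm explicitly and named $(K_0^{-1}K_1)^t$ for the resulting one-parameter subgroup, the statement is just a matter of assembling these pieces. The only point that deserves care is to make sure the real logarithm produced in Remark \ref{log_Jordan} genuinely coincides with the unique $C$ provided by Culver's theorem, but this is exactly what the construction there is designed to guarantee (the Jordan-block-wise formula inverts $\exp$ on each block).
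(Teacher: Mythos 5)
Your proposal is correct and follows essentially the same route as the paper: the authors also combine Theorem \ref{Culver_for_GL_n} (to place $K_0^{-1}K_1$ in the class of matrices with positive eigenvalues and unrepeated Jordan blocks), the explicit construction of $LOG(K_0^{-1}K_1)$ and the definition of $(K_0^{-1}K_1)^t$ in Remark \ref{log_Jordan}, and the bijection of Corollary \ref{Cor_to_Levi-Civita_geod_GL}(2) between real solutions of $\exp(X)=K_0^{-1}K_1$ and geodesic arcs. Your added remark that the blockwise logarithm must coincide with the unique $C$ from Culver's theorem is exactly the point the paper settles inside Remark \ref{log_Jordan}, so nothing is missing.
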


\begin{corollary}
If there is a unique geodesic arc joining $K_0, K_1 \in GL_n$ then any two distinct points on the geodesic, to which this arc belongs, are joined by a unique geodesic arc (and of course the geodesic, to which this new arc belongs, overlaps to the previous one).
\end{corollary}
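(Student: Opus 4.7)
The plan is to reduce the claim to the uniqueness criterion of Theorem \ref{Culver_for_GL_n} applied to the matrix $Q_0^{-1}Q_1$, where $Q_0, Q_1$ are the two chosen points. By Proposition \ref{expression_unique_geodesic_arc} the arc is $\gamma(t) = K_0 M^t$ with $M := K_0^{-1}K_1$, and the maximal geodesic extending it is the curve $t \mapsto K_0 M^t$ defined for all $t \in \mathbb{R}$. Picking two distinct points $Q_0 = K_0 M^{t_0}$ and $Q_1 = K_0 M^{t_1}$ on it, and setting $s := t_1 - t_0 \ne 0$, a direct multiplication together with the identity $M^{t_0} M^{t_1} = M^{t_0 + t_1}$ (which follows from $M^t = \exp(t\,LOG(M))$) gives $Q_0^{-1}Q_1 = M^{s}$. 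So I only need to check that $M^s$ has all positive real eigenvalues and that no elementary divisor of $M^s$ occurs more than once.

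By the uniqueness hypothesis and Remark \ref{log_Jordan}, I can write $M = C^{-1}\,\mathrm{diag}(J_{k_1}(\lambda_1), \ldots, J_{k_p}(\lambda_p))\,C$ with $\lambda_i > 0$ and the pairs $(\lambda_i, k_i)$ pairwise distinct; the same remark yields $M^s = C^{-1}\,\mathrm{diag}(J_{k_1}(\lambda_1)^s, \ldots, J_{k_p}(\lambda_p)^s)\,C$. The core of the argument is a single linear-algebra fact: for $\lambda > 0$ and $s \ne 0$, the block $J_k(\lambda)^s$ is similar to $J_k(\lambda^s)$. To prove it, I would write $J_k(\lambda)^s = \lambda^s(I_k + N'')$ with $N'' = \sum_{i=1}^{k-1}\binom{s}{i}\lambda^{-i} N_k^i$; since the superdiagonal coefficient $s/\lambda$ is nonzero, each power $(N'')^j$ has nonzero $j$-th superdiagonal and hence rank $k - j$, so $N''$ has the same Jordan structure as $N_k$. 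A diagonal conjugation by $D = \mathrm{diag}(1, \lambda^{-s}, \lambda^{-2s}, \ldots, \lambda^{-(k-1)s})$ then rescales the superdiagonal and delivers the claimed similarity.

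Consequently $M^s$ has Jordan form $\mathrm{diag}(J_{k_1}(\lambda_1^s), \ldots, J_{k_p}(\lambda_p^s))$. The pairs $(\lambda_i^s, k_i)$ remain pairwise distinct: the sizes $k_i$ alone distinguish blocks of unequal size, and whenever $k_i = k_j$ one has $\lambda_i \ne \lambda_j$, whence $\lambda_i^s \ne \lambda_j^s$ because $x \mapsto x^s$ is injective on $(0,\infty)$ for $s \ne 0$. By Theorem \ref{Culver_for_GL_n} there is thus a unique geodesic arc joining $Q_0$ and $Q_1$; and since the affine reparametrisation $t \mapsto K_0 M^{t_0 + s t}$, $t \in [0,1]$, is already a geodesic arc from $Q_0$ to $Q_1$, it must be that arc, which establishes the parenthetical overlap claim. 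The only step that takes real care is the similarity $J_k(\lambda)^s \sim J_k(\lambda^s)$; once that rank computation is in hand, the rest is bookkeeping.
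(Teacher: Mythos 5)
Your proof is correct and follows essentially the same route as the paper: reduce to $Q_0^{-1}Q_1 = (K_0^{-1}K_1)^{s}$ with $s\neq 0$, identify its Jordan form as that of $K_0^{-1}K_1$ with eigenvalues $\lambda_i^{s}$, and invoke the uniqueness criterion of Theorem \ref{Culver_for_GL_n}. The only difference is that you spell out, via the rank computation, the similarity $J_k(\lambda)^s \sim J_k(\lambda^s)$ and the injectivity of $x\mapsto x^s$ on $(0,\infty)$, steps the paper asserts without detail.
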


\begin{proof}
From the previous proposition the unique geodesic arc joining $K_0, K_1 \in GL_n$ is $\gamma (t) = K_0 (K_0^{-1}K_1)^t$. Now let $P, Q$ be on the corresponding geodesic, i.e. $P= K_0 (K_0^{-1}K_1)^r$, $Q= K_0 (K_0^{-1}K_1)^s$ for some $r,s \in \mathbb{R}$, $ r\ne s$. This gives $P^{-1}Q =  (K_0^{-1}K_1)^{-r} (K_0^{-1}K_1)^s= (K_0^{-1}K_1)^{s-r}$. 

Now $K_0^{-1}K_1 = C^{-1} diag(J_{k_1}(\lambda_1), \cdots , J_{k_p}(\lambda_p))C$, with $C \in GL_n$,  
$\lambda_1, \cdots , \lambda_p > 0$ and $(\lambda_i, k_i) \ne (\lambda_j, k_j)$ as soon as $i \ne j$ (remember Remark \ref{log_Jordan}), so we get $P^{-1}Q = C^{-1} diag(J_{k_1}(\lambda_1)^{s-r}, \cdots , J_{k_p}(\lambda_p)^{s-r})C$,  whose Jordan form is the Jordan form of $K_0^{-1}K_1$ with eigenvalues $\lambda_j^{s-r}$ instead of $\lambda_j$ (remember that $r \ne s$). We can conclude with Theorem \ref{Culver_for_GL_n}.
\end{proof}

\begin{proposition}\label{singly_brocken}
Let $K_1$, $K_2$ be matrices both in $GL_n^+$ (resp. $GL_n^-$). Then $K_1$, $K_2$ can always be joined by a singly broken geodesic arc in $(GL_n^+, g)$ (resp. $(GL_n^-, g)$).
\end{proposition}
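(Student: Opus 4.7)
The plan is to show that every element of $GL_n^+$ factors as a product of two real-matrix exponentials; by Corollary \ref{Cor_to_Levi-Civita_geod_GL}(1) this is exactly the ingredient needed to produce a singly broken geodesic arc, and the $GL_n^-$ case will follow by transporting via a left translation.

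First I would reformulate: given $K_1,K_2\in GL_n^+$, I look for an intermediate $M\in GL_n^+$ such that both $K_1^{-1}M$ and $M^{-1}K_2$ lie in the image of $\exp$. Writing $A:=K_1^{-1}K_2\in GL_n^+$ and setting $M=K_1\exp(X_1)$, this reduces to the algebraic question: decompose
\[
A=\exp(X_1)\exp(X_2),\quad X_1,X_2\in M_n(\mathbb{R}).
\]
Then $t\mapsto K_1\exp(tX_1)$ and $t\mapsto M\exp(tX_2)$ are geodesic arcs by Theorem \ref{Levi-Civita_geod_GL}(3), they join $K_1,M$ and $M,K_2$ respectively, and they stay in $GL_n^+$ because the determinant is $\det(K_1)\,e^{t\,\mathrm{tr}(X_i)}$, whose sign is constant.

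To establish the decomposition I would invoke the polar decomposition $A=UP$ with $P=(A^TA)^{1/2}$ symmetric positive definite and $U=AP^{-1}$ orthogonal; since $\det A>0$ and $\det P>0$, we get $U\in SO_n$. The spectral theorem gives a symmetric $S$ with $P=\exp(S)$ (a positive definite matrix has a unique symmetric real logarithm). For $U$ I would use the surjectivity of $\exp:\mathfrak{so}_n\to SO_n$, which is standard for a connected compact Lie group and is also directly confirmed by Theorem \ref{Culver_for_GL_n}(1): the real Jordan form of $U\in SO_n$ consists of $J_1(1)$ blocks, pairs of $2\times 2$ rotation blocks (corresponding to complex conjugate eigenvalues on the unit circle), and $J_1(-1)$ blocks, and $\det U=1$ forces the number of $J_1(-1)$ blocks to be even. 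Hence $U=\exp(K)$ for some skew-symmetric $K$, and $A=\exp(K)\exp(S)$ as desired.

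For the $GL_n^-$ case, I would fix any $G\in GL_n^-$; by Proposition \ref{isomGL}(1) the left translation $L_G$ is an isometry, and since multiplying by $G$ flips the sign of the determinant, $L_G$ maps $GL_n^+$ isometrically onto $GL_n^-$. Applying the previous case to $L_{G^{-1}}(K_1),L_{G^{-1}}(K_2)\in GL_n^+$ produces a singly broken geodesic arc there, and pushing it forward by $L_G$ gives one in $GL_n^-$. The conceptual crux is the two-exponential decomposition in $GL_n^+$; once polar decomposition and the surjectivity of $\exp$ on $SO_n$ are available, everything else is a direct application of results already in the paper.
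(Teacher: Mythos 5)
Your proof is correct, and at its core it is the same argument as the paper's: both use polar decomposition to manufacture an intermediate point so that one leg of the broken geodesic corresponds to an element of $SO_n$ and the other to a positive-definite-type matrix, each of which admits a real logarithm. The differences are in the execution and are worth recording. The paper polar-decomposes $K_1$ and $K_2$ separately ($K_1=O_1P_1$, $K_2=P_2O_2$) and takes $Z=P_2O_1$; the price is that $K_1^{-1}Z=P_1^{-1}\overline{P}$ is a \emph{product of two} positive definite matrices, so an auxiliary fact (simultaneous diagonalization under congruence, to see that this product is similar to a positive diagonal matrix) is needed before Corollary \ref{Culver_for_diag} applies. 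You instead polar-decompose the single matrix $K_1^{-1}K_2=UP$ and take $M=K_1U$, so the two legs are exactly $U\in SO_n$ and $P$ positive definite; you can then exhibit the logarithms directly ($U=\exp(K)$ with $K$ skew-symmetric, $P=\exp(S)$ with $S$ symmetric) instead of invoking Culver's existence theorem, which sidesteps the product-of-positive-definites lemma entirely. Your handling of $GL_n^-$ by transporting a broken geodesic with the isometry $L_G$, $G\in GL_n^-$, is also slightly cleaner than the paper's, which reruns the construction with improper orthogonal polar factors. One microscopic slip in your sketch of the surjectivity of $\exp$ on $SO_n$: each complex-conjugate pair of eigenvalues of $U$ contributes \emph{one} $2\times 2$ rotation block in the real normal form, not a pair of blocks; this has no effect on the argument.
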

\begin{proof}
We prove that for any $K_1, K_2$ as above there is a nonsingular matrix $Z$ that can be joined by a geodesic arc with both $K_1, K_2$. This fact, together with Theorem \ref{Culver_for_GL_n}, allows to conclude.

For any $K_1, K_2 \in GL_n^+$ we can consider their polar decompositions $K_1 = O_1 P_1$, $K_2= P_2 O_2$ with $O_1, O_2$ special orthogonal real matrices and $P_1, P_2$ positive definite real matrices. We denote $Z= P_2 O_1 = O_1 \overline{P}$ with $\overline{P}= O_1^T P_2 O_1$: note that also $\overline{P}$ is positive definite. We have $K_1^{-1} Z = P_1^{-1} O_1^T O_1 \overline{P} = P_1^{-1} \overline{P}$. Now $P_1^{-1}$ and $\overline{P}$ are simultaneously diagonalizable under congruences and $P_1^{-1} \overline{P}$ is similar to a nonsingular diagonal real matrix with positive eigenvalues, hence by Corollary \ref{Culver_for_diag} there is a geodesic arc in $(GL_n, g)$, joining $Z$ and $K_1$.

On the other hand $K_2^{-1} Z = O_2^T P_2^{-1} P_2 O_1 = O_2^T O_1$ which is in $SO_n$. The elements of $SO_n$ are similar to diagonal complex matrices in which, if a negative real eigenvalue appears, it is $-1$ and appears an even number of times, thus by Theorem \ref{Culver_for_GL_n} there is a geodesic arc in $(GL_n, g)$, joining $Z$ and $K_2$.

Analogous arguments work, if $K_1, K_2 \in GL_n^-$. Indeed now the polar decompositions are $K_1 = O_1 P_1$, $K_2= P_2 O_2$ with $O_1, O_2$ orthogonal real matrices with negative determinant and $P_1, P_2$ positive definite real matrices. Again $\overline{P}$ is positive definite and $K_2^{-1} Z = O_2^T O_1 \in SO_n$.
\end{proof}

\section{Curvature of $(GL_n, g)$}

\begin{proposition}\label{RiemannTens_GL}
Let $K \in GL_n$ and $X, Y, Z \in M_n$.

1) The Riemann curvature tensor of type $(1,3)$ of $GL_n$ at $K$ is
$$
(R_{XY}Z)_{_K} =  - \dfrac{1}{4} ( Z \, [K^{-1}X, K^{-1}Y] - [XK^{-1}, YK^{-1}]\, Z ).
$$

2) The Riemann curvature tensor of type $(0,4)$ of $GL_n$ at $K$ is
$$R_{XYZW}(K) = \dfrac{1}{4} tr([K^{-1}X, K^{-1}Y] \, [K^{-1}Z, K^{-1}W]).$$

3)  If $s_K$ is a nondegenerate $2$-section of $T_K(GL_n)$ (i.e. a $2$-dimensional subspace of $T_K(GL_n)= M_n$ such that the restriction of $g_K$ to $s_K \times s_K$ is a nondegenerate symmetric bilinear form) and $X, Y$ are linearly independent vectors in $s_K$, then the sectional curvature of $(GL_n, g)$ on $s_K$ is 
$$
\mathcal{K}(s_K) = \dfrac{1}{4} \dfrac{tr([K^{-1}X,K^{-1}Y]^2)}{g_{_K}(X,X)g_{_K}(Y,Y) - g_{_K}(X,Y)^2}.$$
\end{proposition}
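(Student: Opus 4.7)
The plan is to use Corollary \ref{Cartan_0_Conn} to reduce everything to left-invariant vector fields, where the curvature becomes a pure Lie-algebraic quantity, and then transport the resulting identity from $I_n$ to an arbitrary $K \in GL_n$ by means of the isometry $L_K$ (Proposition \ref{isomGL}).

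For (1), I would fix $V, W, U \in M_n = T_{I_n}(GL_n)$ and let $\tilde V, \tilde W, \tilde U \in \mathcal{GL}_n$ be the corresponding left-invariant vector fields. By Corollary \ref{Cartan_0_Conn}, $\nabla_{\tilde V} \tilde W = \frac{1}{2}[\tilde V, \tilde W]$ is again in $\mathcal{GL}_n$, hence $\nabla_{\tilde V}\nabla_{\tilde W}\tilde U = \frac{1}{4}[\tilde V, [\tilde W, \tilde U]]$, and likewise for the other summands in the definition $R_{\tilde V\tilde W}\tilde U = \nabla_{[\tilde V,\tilde W]}\tilde U - [\nabla_{\tilde V},\nabla_{\tilde W}]\tilde U$ of \cite{O'N1983}. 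The Jacobi identity collapses the result to $R_{\tilde V\tilde W}\tilde U = \frac{1}{4}[[\tilde V,\tilde W],\tilde U]$, so $(R_{VW}U)_{I_n} = \frac{1}{4}[[V,W],U]$. Now $L_K$ is an isometry with $(DL_K)_{I_n}(V) = KV$, and since isometries intertwine the Riemann tensor, $(R_{KV, KW}(KU))_K = K\cdot (R_{VW}U)_{I_n}$. Renaming $X = KV$, $Y = KW$, $Z = KU$ and using the conjugation identity $K[A,B]K^{-1} = [KAK^{-1}, KBK^{-1}]$ to recognize $K[K^{-1}X, K^{-1}Y]K^{-1} = [XK^{-1}, YK^{-1}]$, I expand $K[[K^{-1}X, K^{-1}Y], K^{-1}Z]$ as $[XK^{-1}, YK^{-1}]Z - Z[K^{-1}X, K^{-1}Y]$, which is exactly the claim of (1).

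For (2), I compute $R_{XYZW}(K) = g_K(R_{XY}Z, W) = tr(K^{-1}(R_{XY}Z)K^{-1}W)$. The same bracket identity rewrites $K^{-1}(R_{XY}Z)$ as $\frac{1}{4}[[K^{-1}X, K^{-1}Y], K^{-1}Z]$, and then the cyclic property of trace in the form $tr([A,C]D) = tr(A[C,D])$ produces the asserted formula. Part (3) is immediate: setting $Z = X$, $W = Y$ in (2) gives $R_{XYXY}(K) = \frac{1}{4}tr([K^{-1}X, K^{-1}Y]^2)$, and the standard definition $\mathcal{K}(s_K) = R_{XYXY}(K)/(g_K(X,X)g_K(Y,Y) - g_K(X,Y)^2)$ concludes. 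No real obstacle arises beyond careful sign and bracket bookkeeping; Corollary \ref{Cartan_0_Conn} together with left-invariance under $L_K$ already carries the whole geometric content of the statement.
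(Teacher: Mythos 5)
Your proof is correct, but it takes a genuinely different route from the paper. The paper extends $X,Y,Z$ to vector fields with \emph{constant coefficients} in the global euclidean coordinates of Notations \ref{base_euclidean_derivative}, so that $[X,Y]=0$ and all euclidean derivatives vanish, and then grinds out $\nabla_X\nabla_Y Z$ directly from the explicit formula of Theorem \ref{Levi-Civita_geod_GL}; the pointwise expression at $K$ falls out of that computation. You instead work with \emph{left-invariant} fields, where Corollary \ref{Cartan_0_Conn} turns the connection into $\frac12\,\mathrm{ad}$, so the curvature collapses via Jacobi to $R_{VW}U=\frac14[[V,W],U]$ at the identity, and you then transport to $K$ by the isometry $L_K$ using $(DL_K)_I(V)=KV$ and the conjugation identity $K[K^{-1}X,K^{-1}Y]K^{-1}=[XK^{-1},YK^{-1}]$; I checked the bracket bookkeeping and the signs (with O'Neill's convention $R_{XY}=\nabla_{[X,Y]}-[\nabla_X,\nabla_Y]$) and everything matches, including the trace manipulation $tr([[A,B],C]D)=tr([A,B][C,D])$ for part (2). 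In effect you reverse the paper's logical order: the paper deduces Corollary \ref{R_{XY}Z} (the formula $R_{XY}Z=\frac14[[X,Y],Z]$ on $\mathcal{GL}_n$) \emph{from} Proposition \ref{RiemannTens_GL}, whereas you prove that Lie-algebraic identity first and derive the Proposition from it; this is legitimate because Corollary \ref{Cartan_0_Conn} is established independently, before the curvature section. What your route buys is brevity and transparency: it makes clear that the whole content is the curvature formula of a bi-invariant-type connection plus homogeneity, at the cost of invoking the (standard, but unstated in the paper) fact that isometries intertwine the Riemann tensor. The paper's route is longer but entirely self-contained, relying only on the coordinate expression of $\nabla$ already proved.
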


\begin{proof}
If $\{E_\alpha\}$, $1 \le \alpha \le n^2$ is the basis contructed in Notations \ref{base_euclidean_derivative}, for $X, Y, Z \in T_K(GL_n) = M_n$, we have $X= \sum_{\alpha = 1}^{n^2} X^{\alpha} \dfrac{\partial}{\partial p^\alpha}\vert_{_K}$, $Y= \sum_{\beta = 1}^{n^2} Y^{\beta} \dfrac{\partial}{\partial p^\beta}\vert_{_K}$, $Z= \sum_{\gamma = 1}^{n^2} Z^{\gamma} \dfrac{\partial}{\partial p^\gamma}\vert_{_K}$, where $X_\alpha, Y_\beta, Z_\gamma \in \mathbb{R}$. 

We can extend in a natural way $X, Y, Z$ to $C^\infty$-vector fields with constant coefficients on $GL_n$, we still call $X, Y, Z$. Then for any $Q \in GL_n$ we have $X_Q= \sum_{\alpha = 1}^{n^2} X^{\alpha} \dfrac{\partial}{\partial p^\alpha}\vert_{_Q}$, $Y_Q= \sum_{\beta = 1}^{n^2} Y^{\beta} \dfrac{\partial}{\partial p^\beta}\vert_{_Q}$, $Z_Q= \sum_{\gamma = 1}^{n^2} X^{\gamma} \dfrac{\partial}{\partial p^\gamma}\vert_{_Q}$ and so 
$X(Z)$, $Y(Z)$, $X(Y)$, $Y(X)$ are identically zero. 

Hence, by Theorem \ref{Levi-Civita_geod_GL}, $(\nabla_YZ)_{_Q} = -\dfrac{1}{2} (Y Q^{-1} Z + ZQ^{-1}Y)$. Again, by Theorem \ref{Levi-Civita_geod_GL}, we get
\begin{multline*}
(\nabla_X(\nabla_Y Z))_{_K} = (X(\nabla_Y Z))_{_K} -\dfrac{1}{2} \{ X K^{-1} (\nabla_Y Z)_K +  (\nabla_Y Z)_K K^{-1} X \} = \\%{-\dfrac{1}{2} [Y(- K^{-1}X K^{-1})Z + Z(-K^{-1}XK^{-1})Y] -\\  \dfrac{1}{2} \{ XK^{-1} [- \dfrac{1}{2} (YK^{-1}Z + ZK^{-1}Y)] + [-\dfrac{1}{2}(YK^{-1}Z+ZK^{-1}Y)K^{-1}X] \} =\\}
  \shoveleft{\dfrac{1}{2} (YK^{-1}XK^{-1}Z + Z K^{-1} X K^{-1} Y) +} \\
  \dfrac{1}{4} (X K^{-1} Y K^{-1} Z + X K^{-1} Z K^{-1} Y + Y K^{-1} Z K^{-1} X + Z K^{-1} Y K^{-1} X).
\end{multline*}

Interchanging $X$ and $Y$ we get another analogous formula.

$X, Y$ are vector fields with constant coefficients with respect to the coordinate fields $E_\alpha$, so, by Schwarz rule, we have $[X,Y] = 0$; therefore at $K$ we get: 
\begin{multline*}
(R_{XY}Z)_{_K} = - (\nabla_X(\nabla_YZ))_{_K} + (\nabla_Y(\nabla_XZ))_{_K} = \\ 
- \dfrac{1}{4} (ZK^{-1}(XK^{-1}Y - Y K^{-1} X) + (Y K^{-1} X - X K^{-1} Y) K^{-1}Z) =\\
 - \dfrac{1}{4} ( Z \, [K^{-1}X, K^{-1}Y] - [XK^{-1}, YK^{-1}]\, Z ).
\end{multline*}
This completes (1).

We get (2) by standard computations remembering (1):
\begin{multline*}
R_{XYZW}(K) = g_K(R_{XY}Z,W) = \\
  \dfrac{1}{4} (tr\{(K^{-1}XK^{-1}Y- K^{-1}YK^{-1}X)(K^{-1}ZK^{-1}W-K^{-1}WK^{-1}Z)\})=\\
  \dfrac{1}{4} tr([K^{-1}X, K^{-1}Y] \, [K^{-1}Z, K^{-1}W]).
\end{multline*}

Finally we get (3) from (2), because $\mathcal{K}(s_K) = \dfrac{R_{XYXY}}{g_{_K}(X,X)g_{_K}(Y,Y)-g_{_K}(X,Y)^2}$ and it does not depend on the generators $X, Y$.
\end{proof}

\begin{corollary}\label{R_{XY}Z}
With the same notations as in Remark \ref{rem_before_0_conn}, if $R$ is the Riemann curvature tensor of type $(1,3)$ of $(GL_n, g)$, then 
$$
R_{XY}Z = \dfrac{1}{4} [[X,Y],Z] \in \mathcal{GL}_n
$$
for any $X,Y,Z \in \mathcal{GL}_n$, which can be written in the form 
$$
R_{XY} = \dfrac{1}{4} ad([X,Y])
$$
for any for any $X,Y \in \mathcal{GL}_n$ (see for instance \cite{Helg2001} p.99-100).
\end{corollary}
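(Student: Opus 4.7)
The plan is to specialize the general formula for $R_{XY}Z$ in Proposition \ref{RiemannTens_GL}(1) to left-invariant vector fields and exploit the fact that $P^{-1}X_P$ is constant along $P$ when $X \in \mathcal{GL}_n$. Concretely, for left-invariant fields with $X_{I_n}=X_0$, $Y_{I_n}=Y_0$, $Z_{I_n}=Z_0$ in $M_n$, Remark \ref{rem_before_0_conn} gives $X_P = PX_0$, $Y_P = PY_0$, $Z_P = PZ_0$, so
$$P^{-1}X_P = X_0, \quad P^{-1}Y_P = Y_0, \quad P^{-1}Z_P = Z_0,$$
while $X_P P^{-1} = P X_0 P^{-1}$ and $Y_P P^{-1} = P Y_0 P^{-1}$.

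Next, I would substitute these identities into Proposition \ref{RiemannTens_GL}(1) at the point $P$. The first summand becomes $Z_P[X_0,Y_0] = PZ_0[X_0,Y_0]$. For the second summand, I would use $[PX_0 P^{-1}, PY_0 P^{-1}] = P[X_0,Y_0]P^{-1}$, so it reads $P[X_0,Y_0]P^{-1}\cdot PZ_0 = P[X_0,Y_0]Z_0$. Collecting gives
$$(R_{XY}Z)_P \;=\; -\frac{1}{4}\,P\bigl(Z_0[X_0,Y_0] - [X_0,Y_0]Z_0\bigr) \;=\; \frac{1}{4}\,P\,[[X_0,Y_0],Z_0].$$

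Finally, I would invoke Remark \ref{rem_before_0_conn} once more: the Lie bracket $[X,Y]$ of left-invariant fields is itself left-invariant with value $[X_0,Y_0]$ at $I_n$, and iterating, $[[X,Y],Z]$ is left-invariant with value $[[X_0,Y_0],Z_0]$ at $I_n$. Therefore $[[X,Y],Z]_P = P[[X_0,Y_0],Z_0]$, and the displayed equation above becomes exactly $(R_{XY}Z)_P = \tfrac{1}{4}[[X,Y],Z]_P$ for every $P \in GL_n$, so $R_{XY}Z = \tfrac14[[X,Y],Z]$ as elements of $\mathcal{GL}_n$. The alternative form $R_{XY} = \tfrac14 \,ad([X,Y])$ is then immediate from the definition $ad(W)Z = [W,Z]$.

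The calculation is essentially routine bookkeeping; the only point requiring care is the sign identity $Z_0[X_0,Y_0]-[X_0,Y_0]Z_0 = -[[X_0,Y_0],Z_0]$, which matches with the global sign $-\tfrac14$ in the formula of Proposition \ref{RiemannTens_GL}(1) to produce the final $+\tfrac14$. No genuine obstacle is expected: once the left-invariant structure is used to turn $K^{-1}X_K$, $X_K K^{-1}$, etc.\ into constants (respectively conjugates), the ambient pointwise formula collapses to an identity among matrices at the identity, and left-invariance then propagates it to all of $GL_n$.
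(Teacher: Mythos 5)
Your proposal is correct and follows essentially the same route as the paper: both substitute the left-invariant expressions $X_K=KX_0$, $Y_K=KY_0$, $Z_K=KZ_0$ into the pointwise formula of Proposition \ref{RiemannTens_GL}(1), reduce it to $\frac{1}{4}K[[X_0,Y_0],Z_0]$, and identify this with $\frac{1}{4}[[X,Y],Z]_K$ via Remark \ref{rem_before_0_conn}. The sign bookkeeping you flag is handled identically in the paper, so there is nothing to add.
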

\begin{proof}
By Proposition \ref{RiemannTens_GL} at a point $K \in GL_n$ and with the notations of Remark \ref{rem_before_0_conn}, we have: 

$
R_{X_KY_K}Z_K =  \dfrac{1}{4} ( -Z_K \, [K^{-1}X_K, K^{-1}Y_K] + [X_KK^{-1}, Y_KK^{-1}]\, Z_K )
$, which by standard computations becomes $\dfrac{K}{4}[[X_0,Y_0],Z_0]$. Hence, always by Remark \ref{rem_before_0_conn}, the former is $\dfrac{1}{4}[[X,Y], Z]_{_K}$, which allows to conclude.
\end{proof}

\begin{remark}\label{base_orton}
Let $E_{ij}$, $1 \le i, j \le n$ be the matrix whose entries are zero everywhere except for the entry $(i,j)$ which is $1$. Easy computations show that at the point $I=I_n \in GL_n$ an orthonormal basis for $g_I$ is 
\begin{multline*}
\{D_i= E_{ii} / i=1, \cdots , n \} \ \cup \\
\{ S_{ij}= \dfrac{E_{ij}+E_{ji}}{\sqrt{2}} / 1 \le i < j \le n \} \ \cup \\
\{ A_{ij}= \dfrac{E_{ij}-E_{ji}}{\sqrt{2}} / 1 \le i < j \le n \}.
\end{multline*}

The vectors $D_i$'s and $S_{ij}$'s are space-like, while the vectors $A_{ij}$'s are time-like.
\end{remark}

\begin{proposition}\label{Ricci_GL}
Let $Ric_K$ be the Ricci curvature tensor of $(GL_n, g)$ at $K \in GL_n$, then for any $X, Y \in T_K(GL_n) = M_n$ we have 
$$Ric_K(X,Y) = \dfrac{1}{2} tr(K^{-1}X) tr(K^{-1}Y) - \dfrac{n}{2} g_K(X,Y).$$

Moreover $(GL_n, g)$ is a Semi-Riemannian manifold whose scalar curvature is constant and equal to $S= - \dfrac{(n+1)n(n-1)}{2}$.
\end{proposition}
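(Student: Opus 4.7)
The plan is to use homogeneity to reduce to $K = I$, invoke the identity-case curvature formula from Corollary \ref{R_{XY}Z}, and finish with a short linear-algebra computation of traces on $M_n$. Since $L_K$ is an isometry sending $I$ to $K$ with (linear) differential $L_K$, the Ricci tensor satisfies $Ric_K(X,Y) = Ric_I(K^{-1}X, K^{-1}Y)$. Combined with $g_I(K^{-1}X, K^{-1}Y) = tr(K^{-1}XK^{-1}Y) = g_K(X,Y)$, the full formula reduces to proving
\[
Ric_I(V,W) = \tfrac{1}{2} tr(V) tr(W) - \tfrac{n}{2} g_I(V,W)
\]
for $V, W \in M_n$.

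At $I$, Corollary \ref{R_{XY}Z} gives $R_{VZ}W = \tfrac{1}{4}[[V,Z],W]$. Using $Ric_I(V,W) = tr(Z \mapsto R_{VZ}W)$ (the convention consistent with the paper's sign choice for the Riemann tensor in the proof of Proposition \ref{RiemannTens_GL}), I would expand the double commutator as $VZW - ZVW - WVZ + WZV$ and apply the standard trace identities on $M_n$: $tr(Z \mapsto AZB) = tr(A)tr(B)$ and $tr(Z \mapsto AZ) = tr(Z \mapsto ZA) = n\, tr(A)$, all easily verified on the basis $\{E_{ij}\}$. The four terms combine to $\tfrac{1}{4}(2\,tr(V)tr(W) - 2n\,tr(VW))$, which is exactly the desired formula at $I$.

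For the scalar curvature, constancy follows from homogeneity, so it is enough to compute $S$ at $I$ in the $g_I$-orthonormal basis of Remark \ref{base_orton}, with signs $\epsilon_m = g_I(E_m, E_m) \in \{\pm 1\}$:
\[
S = \sum_m \epsilon_m Ric_I(E_m, E_m) = \tfrac{1}{2} \sum_m \epsilon_m tr(E_m)^2 - \tfrac{n}{2} \sum_m \epsilon_m g_I(E_m, E_m).
\]
The second sum equals $\dim M_n = n^2$, since $\epsilon_m^2 = 1$ for every $m$. For the first, the observation $tr(X) = g_I(I, X)$ identifies $I$ as the $g_I$-dual of the linear form $tr$, so the Parseval-type identity $\sum_m \epsilon_m g_I(T, E_m)^2 = g_I(T, T)$ (which is immediate from expanding $T$ in the basis) gives $\sum_m \epsilon_m tr(E_m)^2 = g_I(I,I) = n$. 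Hence $S = \tfrac{n}{2} - \tfrac{n^3}{2} = -\tfrac{n(n-1)(n+1)}{2}$.

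The only subtle point is pinning down the sign convention for Ricci consistent with the paper's sign convention for the Riemann tensor; once $Ric_I(V,W) = tr(Z \mapsto R_{VZ}W)$ is adopted, everything else is bookkeeping, and the same calculation can be cross-checked by evaluating $Ric_I$ directly on each of the three types of basis vectors $D_i$, $S_{ij}$, $A_{ij}$.
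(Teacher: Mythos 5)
Your proposal is correct and follows essentially the same route as the paper: reduce to $K=I$ by homogeneity, apply Corollary \ref{R_{XY}Z} to get $Ric_I(V,W)=\operatorname{tr}\bigl(Z\mapsto \tfrac14[[V,Z],W]\bigr)$, and then sum over the orthonormal basis of Remark \ref{base_orton} for the scalar curvature. The only differences are cosmetic: where the paper recognizes this trace as $-\tfrac14 B(V,W)$ and quotes the Killing-form formula $B(V,W)=2n\,\operatorname{tr}(VW)-2\operatorname{tr}(V)\operatorname{tr}(W)$ from Fulton--Harris, you prove the same identity directly from $\operatorname{tr}(Z\mapsto AZB)=\operatorname{tr}(A)\operatorname{tr}(B)$ and $\operatorname{tr}(Z\mapsto AZ)=n\operatorname{tr}(A)$, and your Parseval-type evaluation of $\sum_m \epsilon_m \operatorname{tr}(E_m)^2=g_I(I,I)=n$ replaces the paper's case-by-case computation of $Ric_I$ on the $D_i$, $S_{ij}$, $A_{ij}$ --- both small, self-contained simplifications that change nothing essential.
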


\begin{proof} 
The formula on $Ric(X,Y)$ can be obtained by standard but long (and tedious) computations. Next we give shorter (and perhaps more elegant) arguments involving the Cartan-Killing form (for standard facts on it see for instance \cite{Helg2001} p.131 and \cite{FulHar1991}).

In general $Ric(X,Y)$ is the trace of the map $Z \mapsto R_{XZ}Y$, where we can suppose $X, Y, Z$ left invariant. By Corollary \ref{R_{XY}Z} this trace is the trace of $Z \mapsto \dfrac{1}{4} [[X,Z], Y] = - \dfrac{1}{4}[Y,[X,Z]] = -\dfrac{1}{4} (ad_Y \circ ad_X) (Z) = - \dfrac{1}{4} B(Y,X) = - \dfrac{1}{4} B(X,Y)$ where $B$ is the Cartan-Killing form of $\mathcal{GL}_n$. 

Now it is known that $B(X,Y) = 2n \ tr(XY) -2 \ tr(X)tr(Y) = 2n \ g(X,Y) - 2\ tr(X)tr(Y)$ (see for instance \cite{FulHar1991} p.210). 

Then $Ric_I(X,Y) = \dfrac{1}{2}tr(X)tr(Y) -\dfrac{n}{2} g_I(X,Y)$ for any $X, Y \in \mathcal{GL}_n$.

More generally for any $X,Y \in T_K(GL_n)$ we get the expected formula for $Ric_K(X,Y)$.

Finally let $S$ be the scalar curvature of $(GL_n, g)$, which is homogeneous, so $S$ is constant, because it is invariant under isometries. Hence it suffices to compute it at the point $I=I_n$. By Remark \ref{base_orton}: 

$S= \sum_{i=1}^n Ric_I(D_i,D_i) + \sum_{1 \le i < j \leq n} Ric_I(S_{ij}, S_{ij}) - \sum_{1 \le i<j \le n} Ric_I(A_{ij}, A_{ij})$. 

But the first part of this proposition gives 

$Ric_I(D_i,D_i) = - \dfrac{(n-1)}{2}$ 
for any $i= 1, \cdots  n$, 

$Ric_I(S_{ij}, S_{ij}) = \dfrac{1}{2} (tr(S_{ij}))^2 - \dfrac{n}{2} g_I(S_{ij}, S_{ij}) = -\dfrac{n}{2}$ for any $1 \le i < j \le n$, 

$ Ric_I(A_{ij}, A_{ij}) = \dfrac{1}{2} (tr(A_{ij}))^2 - \dfrac{n}{2} g_I(A_{ij}, A_{ij})=  \dfrac{n}{2}$ for any $1 \le i < j \le n$. 

Putting together the previous computations, we easily conclude the last statement too. 
\end{proof}

\section{The Semi-Riemannian manifold $SL_n(\mathbb{R})$}

\begin{remark}\label{tangent_SL}
For any $K \in SL_n $ we have: $T_K(SL_n) = \{ W \in M_n / tr(K^{-1} W)=0 \}$. 

\smallskip

Recall the Jacobi's formula: if $A = A(t)$  is a $C^1$-curve of $GL_n$ with $t \in (a,b) \subset \mathbb{R}$, then $\dfrac{d}{dt}(\det  A(t)) = \det  (A(t)) tr(A^{-1}(t) {\stackrel{.}{A\ }}\!(t))$ for any $t \in (a,b)$ where ${\stackrel{.}{A\ }} = \dfrac{dA}{dt}$. Then if $P=P(t)$ is a $C^\infty$-curve in $SL_n$ (hence $\det P(t) =1$ for any $t$), we get: $tr(P^{-1}(t) {\stackrel{.}{P}}(t)) = 0 $ for any $t$. This allows to conclude.

\smallskip

At the point $I=I_n \in SL \subset GL_n$ the identity matrix $I$ is a space-like vector, because $g_I(I,I) = tr(I) = n > 0$, whose perpendicular space is $Span(I)^\bot = \{ W \in M_n \  / \ g_I(I,W) = tr(W) = 0 \} = T_I(SL_n)$. Hence $M_n = Span(I) \oplus Span(I)^\bot = Span(I) \oplus T_I(SL_n)$. Given a point $P \in SL_n$, we denote again with $g_P$ the restriction to $T_P(SL_n) \times T_P(SL_n)$ of the tensor $g_P$ defined on $T_P(GL_n) \times T_P(GL_n)$. 
\end{remark}

\begin{proposition}\label{symmetric_semiriem_SL}
$(SL_n, g)$ is a symmetric Semi-Riemannian submanifold of $(GL_n, g)$ with signature $(\dfrac{n(n+1)}{2} -1, \dfrac{n(n-1)}{2})$. It is homogeneous and among its isometries there are left and right translations $L_G, R_G$, congruences $\Gamma_G$ ($G \in SL_n$), conjugacies $C_G$ ($G \in GL_n$) the transposition, the inversion $\varphi$ and all their compositions, so in particular the symmetries $\psi_P = R_P \circ L_P \circ \varphi$.
\end{proposition}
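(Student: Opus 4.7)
The plan is to check the three components of the statement in turn: the submanifold/signature claim, the homogeneity and isometry list, and the symmetric-space structure. All the heavy lifting has actually been done in previous results; what remains is a bookkeeping argument together with one orthogonal-decomposition calculation at the identity.

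First I would establish homogeneity and the list of isometries of $SL_n$, because this will be what allows a one-point check of nondegeneracy and signature via Remark \ref{signature_semir_homog}. The idea is to run through the maps listed in Proposition \ref{isomGL}(1) and verify that each either preserves $SL_n$ (determinant check) or does not, and keep only the ones that do: the left/right translations $L_G, R_G$ with $G \in SL_n$ (since $\det(GX)=\det X$), the congruences $\Gamma_G$ with $G \in SL_n$ (since $\det(G^T X G) = \det(G)^2 \det X$), the conjugacies $C_G$ for any $G \in GL_n$ (since $\det(G^{-1}XG)=\det X$), the transposition $\tau$ and the inversion $\varphi$. Each of these is already an isometry of $(GL_n,g)$ by Proposition \ref{isomGL}(1), so each restricts to an isometry of the submanifold $SL_n$. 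Homogeneity then follows at once: for $P, Q \in SL_n$ the left translation $L_{QP^{-1}}$ lies in the list and sends $P$ to $Q$.

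Next I would compute the signature at the identity. By Remark \ref{tangent_SL} we have the $g_I$-orthogonal decomposition $M_n = \mathrm{Span}(I) \oplus T_I(SL_n)$, with $g_I(I,I)=n>0$. Since the signature of $g_I$ on the whole of $M_n$ is $\bigl(\tfrac{n(n+1)}{2},\tfrac{n(n-1)}{2}\bigr)$ by Proposition \ref{homog}, Sylvester's law applied to this orthogonal splitting forces the restriction of $g_I$ to $T_I(SL_n)$ to be nondegenerate with signature $\bigl(\tfrac{n(n+1)}{2}-1,\tfrac{n(n-1)}{2}\bigr)$. Combining with homogeneity and Remark \ref{signature_semir_homog}, the restriction of $g$ to $SL_n$ is nondegenerate of this signature at every point, which proves that $(SL_n,g)$ is a Semi-Riemannian submanifold of $(GL_n,g)$ with the claimed signature.

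Finally, for the symmetric-space structure, I would observe that $SL_n$ is connected and that $\psi_P = R_P \circ L_P \circ \varphi$ is, for $P \in SL_n$, a composition of three maps each of which preserves $SL_n$ and is an isometry of $(SL_n,g)$ by what was just shown. It fixes $P$ (direct check: $P P^{-1} P = P$), and by Proposition \ref{isomGL}(2) its differential at $P$ is $-\mathrm{id}$ on $T_P(GL_n)$, hence in particular on the subspace $T_P(SL_n)$. This is exactly the defining condition of a Semi-Riemannian (globally) symmetric space, completing the proof.

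The only potentially delicate step is the signature computation, but because the decomposition $M_n = \mathrm{Span}(I) \oplus T_I(SL_n)$ is already $g_I$-orthogonal and $I$ is manifestly space-like, it reduces to a one-line application of Sylvester; I expect no real obstacle.
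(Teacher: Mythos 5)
Your proposal is correct and follows essentially the same route as the paper: both rest on the $g_I$-orthogonal decomposition $M_n = \mathrm{Span}(I)\oplus T_I(SL_n)$ with $I$ space-like to get nondegeneracy and the signature at $I$ (the paper just verifies the orthogonal-complement/Sylvester fact by hand, chasing a vector $W$ in the radical), and both obtain homogeneity and the isometry list by restricting the determinant-preserving isometries of $(GL_n,g)$, in particular $L_{QP^{-1}}$. Your explicit verification that $\psi_P$ restricts to $SL_n$ with differential $-\mathrm{id}$ on $T_P(SL_n)$ only spells out what the paper compresses into ``we conclude by the analogous results on $(GL_n,g)$ proved above.''
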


\begin{proof}
The metric $g_I$ is nondegenerate with signature $(\dfrac{n(n+1)}{2} -1, \dfrac{n(n-1)}{2})$ on $T_I(SL_n)$ . Indeed let $W \in T_I(SL_n)$ such that $g_I(V,W)=0$ for every $V \in T_I(SL_n)$. If $Z$ is any vector in $T_I(GL_n)= M_n$, then there exists a unique pair $(Z_0, \lambda_0) \in T_I(SL_n) \times \mathbb{R}$, such tha $Z = Z_0 + \lambda_0 I$. Hence $g_I(Z, W) = g_I(Z_0, W) + \lambda_0 g_I(I,W) = g_I(Z_0, W) + \lambda_0 tr(W) = g_I(Z_0, W)=0$, because $Z_0 \in T_I(SL_n)$. Now $g_I$ is nondegenerate on $T_I(GL_n)$, hence $W=0$, therefore $g_I$ is nondegenerate on $T_I(SL_n)$ too. Moreover,  $M_n = Span(I) \oplus T_I(SL_n)$ and $I$ is a space-like vector in $T_I(GL_n)=M_n$, so we get that the index of positivity of $g_I$ on $T_I(SL_n)$ is equal to the analogous index on $T_I(GL_n)$ minus $1$, hence the signature is $(\dfrac{n(n+1)}{2} -1, \dfrac{n(n-1)}{2})$. 
Now, for $P,Q \in SL_n$, the left translation $L_{QP^{-1}}$ is an isometry of $(GL_n, g)$ mapping $P$ into $Q$. Now the restriction of $L_{QP^{-1}}$ to $SL_n$ maps $SL_n$ into itself, so this restriction (denoted again by $L_{QP^{-1}}$) is an isometry of $(SL_n, g)$, which is therefore homogeneous. We conclude the analogous results on $(GL_n, g)$ proved above.
\end{proof}

\begin{proposition}\label{geodSL}
$(SL_n, g)$ is a totally geodesic Semi-Riemannian submanifold of $(GL_n, g)$.

The geodesics of $(SL_n, g)$ are precisely the curves of the type: 
$$P(t)=Ke^{tC}$$ with $\det  (K) = 1$ and  $tr(C) =0$ and $(SL_n, g)$ is geodesically complete.
\end{proposition}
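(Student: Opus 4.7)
The plan is to leverage the characterization of geodesics in $(GL_n, g)$ already obtained in Theorem \ref{Levi-Civita_geod_GL}, combined with the tangency description in Remark \ref{tangent_SL}, to simultaneously establish the totally-geodesic property and the explicit form of geodesics of $(SL_n,g)$. The key elementary fact that will do the work is $\det(e^{tC}) = e^{t \, tr(C)}$, so that the condition $tr(C)=0$ is precisely what keeps the ambient geodesic inside $SL_n$ for all time.

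First I would fix $K \in SL_n$ and $V \in T_K(SL_n)$. By Remark \ref{tangent_SL}, tangency to $SL_n$ is equivalent to $tr(K^{-1}V) = 0$, so writing $C = K^{-1}V$ we have $tr(C) = 0$. By part (1) of Corollary \ref{Cor_to_Levi-Civita_geod_GL}, the unique geodesic of $(GL_n, g)$ emanating from $K$ with velocity $V = KC$ is $P(t) = K e^{tC}$. Applying the identity $\det(e^{tC}) = e^{t \, tr(C)} = 1$ together with $\det(K) = 1$ gives $\det P(t) = 1$ for every $t \in \mathbb{R}$, hence $P(t) \in SL_n$ for all $t$. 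This is precisely the definition of $SL_n$ being a totally geodesic submanifold of $(GL_n, g)$; since by Proposition \ref{symmetric_semiriem_SL} it is already a Semi-Riemannian submanifold, the curve $P(t)$ is also a geodesic of $(SL_n, g)$.

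For the explicit description of geodesics of $(SL_n, g)$: the construction above shows that every such geodesic is of the form $Ke^{tC}$ with $\det(K)=1$ and $tr(C)=0$. Conversely, any geodesic of $(SL_n, g)$ coincides, by uniqueness of geodesics with prescribed initial data in the ambient manifold (and the totally geodesic property), with the corresponding ambient geodesic of $(GL_n,g)$; hence it has the same form, and the constraints $K \in SL_n$, $tr(C)=0$ simply translate the initial point being in $SL_n$ and the initial velocity being tangent to $SL_n$. Finally, since $t \mapsto K e^{tC}$ is defined on the whole of $\mathbb{R}$, every maximal geodesic has domain $\mathbb{R}$, giving geodesic completeness.

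I do not foresee a substantive obstacle: the proof is really just an application of Theorem \ref{Levi-Civita_geod_GL} plus the determinant-trace identity. The only point deserving attention is making sure that the totally geodesic property is genuinely invoked in the converse direction, so that a geodesic of $(SL_n,g)$ is identified with the unique ambient geodesic with the same initial data, rather than reproving existence and uniqueness intrinsically in $SL_n$.
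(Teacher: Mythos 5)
Your proof is correct, but it takes a genuinely different route from the paper's. You establish the totally geodesic property by showing that every ambient geodesic with initial data tangent to $SL_n$ remains in $SL_n$ for all time, via the identity $\det(Ke^{tC}) = \det(K)\, e^{t\, tr(C)} = 1$; this is the ``geodesic invariance'' characterization of totally geodesic submanifolds (it is an equivalent characterization rather than the definition itself --- the standard definition is the vanishing of the second fundamental form, and the equivalence is e.g.\ Proposition~4.13 in O'Neill, the paper's own reference, so you should cite it rather than call it the definition). The paper instead verifies the vanishing of the second fundamental form directly: it takes vector fields $X,Y$ tangent to $SL_n$, uses the explicit formula for $\nabla_X Y$ from Theorem~\ref{Levi-Civita_geod_GL}, and computes $tr\bigl(P^{-1}(\nabla_X Y)_P\bigr)=0$ by differentiating the constraint $tr(P^{-1}Y_P)=0$ along $X$. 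Your argument is shorter and more conceptual, and it handles the converse direction cleanly by identifying intrinsic geodesics with ambient ones; the paper's computation has the side benefit of exhibiting the induced Levi-Civita connection of $SL_n$ as formally identical to the ambient formula, which the authors then reuse for the curvature computations in Proposition~\ref{SL_Einstein}. With your approach the same conclusion about the induced connection still follows (the Gauss formula plus vanishing of the normal component), but it is obtained implicitly rather than by the explicit trace computation.
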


\begin{proof}
As usual let $\nabla$ be the Levi-Civita connection of $(GL_n, g)$ and let $X,Y$ be vector fields, which are tangent to the submanifold $SL_n$. So, by Remark \ref{tangent_SL}, for any $P \in SL_n$: $tr(P^{-1}X_P)=tr(P^{-1}Y_P)=0$. The first part of the proposition follows from the fact that $(\nabla_XY)_P \in T_P(SL_n)$, i.e. again by Remark \ref{tangent_SL} from the fact that $tr(P^{-1} (\nabla_XY)_P)=0$ for any $P \in SL_n$. 

By Theorem \ref{Levi-Civita_geod_GL} we have: $(\nabla_XY)_P = (X(Y))_P - \dfrac {1}{2} \{ X_PP^{-1} Y_P + Y_P P^{-1}X_P \}$. Hence 
\begin{multline*}
P^{-1}(\nabla_XY)_P =\\
 P^{-1} (X(Y))_P - \dfrac {1}{2} P^{-1} X_PP^{-1} Y_P - \dfrac{1}{2} P^{-1} Y_P P^{-1}X_P = \\
 P^{-1} (X(Y))_P - P^{-1} X_PP^{-1} Y_P + \dfrac{1}{2} (P^{-1} X_PP^{-1} Y_P -P^{-1} Y_PP^{-1} X_P).
\end{multline*}

Now we have $tr(P^{-1}Y_P) =0$ for any $P \in SL_n$, so: 
\begin{multline*}
0 = X_P(tr(P^{-1} Y_P)) = tr(X_P(P^{-1}Y_P)) = \\
- tr( P^{-1} X_P P^{-1} Y_P) + tr( P^{-1} (X(Y))_{_P}).
\end{multline*}
Hence: 

$tr(P^{-1}(\nabla_XY)_P) = X_P(tr(P^{-1}Y_P)) + \dfrac{1}{2} tr(P^{-1}X_PP^{-1}Y_P- P^{-1} Y_P P^{-1}X_P) = \dfrac{1}{2} \{tr((P^{-1} X_P)(P^{-1}Y_P)) -tr((P^{-1}Y_P)(P^{-1}X_P))\} =0$ for any $P \in SL_n$ and so $(SL_n, g)$ is a totally geodesic submanifold.

Let us denote again by $\nabla$ the Levi-Civita connection of $(SL_n, g)$, from the first part of this proposition we get that the expression of $\nabla$ is formally similar to the expression of the Levi-Civita connection on $GL_n$, hence the same holds for the Riemann tensors and, analogously to Theorem \ref{Levi-Civita_geod_GL}, the equation of geodesics in $SL_n$ is the same and the geodesics are the curves of the predicted type.
\end{proof}

\begin{proposition}\label{SL_Einstein}
$(SL_n, g)$ is an Einstein Semi-Riemannian manifold with Ricci curvature tensor $Ric= - \dfrac{n}{2}g$ and scalar curvature $S= - \dfrac{(n-1)n(n+1)}{2}$.
\end{proposition}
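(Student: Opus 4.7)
The plan is to exploit the totally geodesic embedding of $SL_n$ in $GL_n$ (Proposition~\ref{geodSL}) in order to transfer the ambient Ricci formula of Proposition~\ref{Ricci_GL} to the submanifold. First I identify the $g_P$-normal direction to $SL_n$ at a point $P \in SL_n$. From Remark~\ref{tangent_SL}, $T_P(SL_n) = \{W \in M_n : tr(P^{-1}W) = 0\}$. A vector $N$ is $g_P$-orthogonal to $T_P(SL_n)$ iff $tr(P^{-1}NP^{-1}W) = 0$ for every such $W$; setting $A = P^{-1}N$ and $B = P^{-1}W$, this means $tr(AB) = 0$ for every traceless $B$, which forces $A \in \mathbb{R}\,I_n$ and hence $N \in \mathbb{R}P$. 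Since $g_P(P,P) = tr(I_n) = n > 0$, the vector $P/\sqrt{n}$ is a unit space-like normal.

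Next, because $SL_n$ is totally geodesic in $GL_n$, the Gauss equation gives $R^{SL_n}_{XY}Z = R^{GL_n}_{XY}Z$ whenever $X,Y,Z \in T_P(SL_n)$, and the right-hand side is automatically tangent to $SL_n$. Completing an orthonormal basis of $T_P(SL_n)$ with the unit normal $P/\sqrt{n}$, the trace defining $Ric^{GL_n}$ splits as
\begin{equation*}
Ric^{GL_n}_P(X,Y) \;=\; Ric^{SL_n}_P(X,Y) \;+\; \frac{1}{n}\, g_P\!\bigl(R^{GL_n}_{XP}Y,\, P\bigr)
\end{equation*}
for $X,Y \in T_P(SL_n)$. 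The normal contribution vanishes: by Proposition~\ref{RiemannTens_GL}(1),
\begin{equation*}
\bigl(R_{XP}Y\bigr)_{P} \;=\; -\frac{1}{4}\bigl(Y\,[P^{-1}X,\, I_n] \;-\; [XP^{-1},\, I_n]\,Y\bigr) \;=\; 0,
\end{equation*}
since $I_n$ commutes with every matrix. Consequently $Ric^{SL_n}_P = Ric^{GL_n}_P$ on tangent vectors.

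Substituting into Proposition~\ref{Ricci_GL} and using $tr(P^{-1}X) = tr(P^{-1}Y) = 0$ for $X,Y \in T_P(SL_n)$ annihilates the first term of the ambient formula, leaving $Ric^{SL_n}_P(X,Y) = -\frac{n}{2}\,g_P(X,Y)$, which is the Einstein condition. Contracting with $g$ on $SL_n$ then yields
\begin{equation*}
S \;=\; -\frac{n}{2}\,\dim(SL_n) \;=\; -\frac{n}{2}(n^2 - 1) \;=\; -\frac{(n-1)n(n+1)}{2}.
\end{equation*}
The only non-routine step is showing that the normal contribution to the Ricci trace vanishes, but with the explicit curvature formula of Proposition~\ref{RiemannTens_GL}(1) this reduces to the triviality $[I_n,\cdot] = 0$, so no real obstacle arises.
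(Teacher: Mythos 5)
Your proof is correct, but it takes a genuinely different route from the paper's. The paper argues intrinsically: it repeats the Cartan--Killing computation of Proposition \ref{Ricci_GL} on the Lie algebra of $SL_n$, getting $Ric(X,Y)=-\frac{1}{4}B(X,Y)$ for left-invariant fields, where $B$ is now the Killing form of $\mathfrak{sl}_n$; since the $tr(X)tr(Y)$ correction disappears on traceless matrices, $B(X,Y)=2n\,tr(XY)=2n\,g(X,Y)$ and $Ric=-\frac{n}{2}g$ follows at once. You instead work extrinsically: you identify $\mathbb{R}P$ as the $g_P$-normal line with unit space-like normal $P/\sqrt{n}$, use the totally geodesic property of Proposition \ref{geodSL} via the Gauss equation to identify the intrinsic and ambient curvature operators on tangent vectors, and split the ambient Ricci trace into the intrinsic Ricci plus the normal contribution $\frac{1}{n}g_P(R_{XP}Y,P)$, which vanishes by Proposition \ref{RiemannTens_GL}(1) because $P^{-1}P=I_n$ is central; the first term of the ambient formula of Proposition \ref{Ricci_GL} then dies on $T_P(SL_n)$ since $tr(P^{-1}X)=0$ there. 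All of these steps check out (including the characterization of the normal direction via nondegeneracy of the trace form on traceless matrices). What your approach buys is reuse of the already-established ambient Ricci formula and a transparent explanation of why the Einstein constant is exactly the coefficient of $g$ in the ambient Ricci; the paper's approach is shorter once the Killing form of $\mathfrak{sl}_n$ is quoted and needs no Gauss equation. Two points worth making explicit if you polish this: the trace splitting presupposes an orthonormal basis of $T_P(SL_n)$, which exists because $g$ restricts nondegenerately there (Proposition \ref{symmetric_semiriem_SL}), and the sign $\epsilon=+1$ attached to the normal direction in the trace is justified precisely because $P/\sqrt{n}$ is space-like.
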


\begin{proof}
Arguing as in the proof of Proposition \ref{Ricci_GL} we get 

$Ric(X,Y) = - \dfrac{1}{4} B(X,Y)$ for any left invariant fields $X,Y$ where $B$ is the Cartan-Killing form on the Lie algebra of $SL_n$ and that this is equal to $2n\, g(X,Y)$ (see for instance \cite{FulHar1991} p. 210) and so we get the expected formula for $Ric$.

Therefore $(SL_n,g)$ is an Einstein manifold (i.e. $Ric$ is a multiple of $g$) with $Ric = - \dfrac{n}{2} g$.  Hence:

$S= -\dfrac{n}{2} dim(SL_n) =  -\dfrac{n}{2} (n^2 -1) = - \dfrac{(n-1)n(n+1)}{2}$.
\end{proof}

\begin{theorem}\label{foliation_GL_SL}
$GL_n = \cup_{c \ne 0} SL_n(c)$ is a foliation of $GL_n$, whose leaves are totally geodesic Semi-Riemannian submanifolds with respect to the metric $g$ of $GL_n$. 

The leaves are Einstein, symmetric, geodesically complete, mutually isometric Semi-Riemannian hypersurfaces with signature $(\dfrac{n(n+1)}{2} -1, \dfrac{n(n-1)}{2})$ and with scalar curvature $S= - \dfrac{(n-1)n(n+1)}{2}$. 

A curve $P=P(t)$ is a geodesic of $(SL_n(c),g)$ if and only if $P(t)= Ke^{tC}$ with $\det (K)=c$, $tr(C)=0$.
\end{theorem}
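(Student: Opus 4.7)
The plan is to reduce every claim to the corresponding statement about $(SL_n, g) = (SL_n(1), g)$ by exhibiting explicit isometries between $SL_n$ and each leaf $SL_n(c)$.

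First I would verify the foliation structure. The map $\det : GL_n \to \mathbb{R} \setminus \{0\}$ is smooth and, by Jacobi's formula recalled in Remark \ref{tangent_SL}, its differential at $P \in GL_n$ is $V \mapsto \det(P)\, tr(P^{-1}V)$, which is a surjective linear functional. Hence $\det$ is a submersion of constant rank $1$, so its fibers $SL_n(c)$ form a foliation of $GL_n$ by smooth hypersurfaces.

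Next, for each $c \in \mathbb{R} \setminus \{0\}$ I would fix any matrix $K_c \in GL_n$ with $\det(K_c) = c$ and consider the left translation $L_{K_c} : GL_n \to GL_n$, which is an isometry of $(GL_n, g)$ by Proposition \ref{isomGL}. Since $L_{K_c}$ restricts to a diffeomorphism $SL_n \to SL_n(c)$ (with inverse the restriction of $L_{K_c^{-1}}$), this restriction is an isometry $(SL_n, g) \to (SL_n(c), g)$. In particular $g$ restricts to a nondegenerate tensor on $SL_n(c)$ with the stated signature, the leaves are pairwise isometric (compose $L_{K_{c_2} K_{c_1}^{-1}}$ to go from $SL_n(c_1)$ to $SL_n(c_2)$), and every property of $(SL_n, g)$ that is invariant under isometries transfers. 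Thus each $(SL_n(c), g)$ is Einstein with $Ric = -\frac{n}{2}g$, symmetric, geodesically complete, and carries the claimed scalar curvature, by Propositions \ref{symmetric_semiriem_SL}, \ref{geodSL} and \ref{SL_Einstein}. Moreover, because isometries of an ambient Semi-Riemannian manifold send totally geodesic submanifolds to totally geodesic submanifolds (geodesics correspond under the isometry, so the defining property of total geodesicity is preserved), the total geodesicity of $SL_n$ in $(GL_n, g)$ established in Proposition \ref{geodSL} transfers to $SL_n(c)$.

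Finally, I would characterize the geodesics of $(SL_n(c), g)$ by exploiting total geodesicity: a curve in $SL_n(c)$ is a geodesic of $(SL_n(c), g)$ if and only if it is a geodesic of $(GL_n, g)$ whose image lies entirely in $SL_n(c)$. By Theorem \ref{Levi-Civita_geod_GL}(3) such geodesics of $GL_n$ are precisely the curves $P(t) = K e^{tC}$ with $K \in GL_n$ and $C \in M_n$. Since $\det(P(t)) = \det(K)\, e^{t\, tr(C)}$, the image stays in $SL_n(c)$ for every $t$ if and only if $\det(K) = c$ and $tr(C) = 0$, which yields the stated parametrization. I do not expect any genuine obstacle: the only conceptual step is identifying the correct isometric reduction to $SL_n$, after which everything follows from the previously proved propositions; the mildly subtle point worth checking explicitly is just that the isometries $L_{K_c}$ really do intertwine all of the listed structures.
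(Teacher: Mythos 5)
Your proposal is correct and follows essentially the same route as the paper: transport all the structure from $(SL_n,g)$ to $(SL_n(c),g)$ via a left translation by a matrix of determinant $c$, which is an isometry of $(GL_n,g)$ carrying $SL_n$ onto $SL_n(c)$ and hence preserves total geodesicity and all the listed isometry-invariant properties. Your added details (the submersion argument for the foliation and the computation $\det(Ke^{tC})=\det(K)e^{t\,tr(C)}$ pinning down the geodesics) are correct and merely make explicit what the paper leaves to the reader.
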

\begin{proof}
For any $P \in  SL_n(c)$ let us denote again by $g_P$ the restriction to $T_P(SL_n(c))$ of the metric tensor $g$ and let $P_0$ be a fixed point of $SL_n(c)$ (hence $\det (P_0) = c$). Then the left translation $L_{P_0}: (GL_n, g) \to (GL_n, g)$ in an isometry, mapping $SL_n$ onto $SL_n(c)$. Hence the restriction of $L_{P_0}$ is an isometry of $(SL_n, g)$ onto $(SL_n(c), g)$. Moreover, $L_{P_0}$ in an isometry of $(GL_n, g)$, so it transforms totally geodesic Semi-Riemannian submanifolds into totally geodesic Semi-Riemannian submanifolds and this allows to conclude about $SL_n(c)$ . Finally for geodesics we can argue as in Proposition \ref{geodSL}.
\end{proof}

\begin{theorem}\label{GL+_prod}
The manifold $(GL_n^+ ,g)$, with $GL_n^+ = \{ A \in GL_n / \det (A) > 0 \}$, is an open Semi-Riemannian submanifold of $(GL_n ,g)$, isometric to the Semi-Riemannian product manifold $(SL_n \times \mathbb{R}, g \times h)$ where $h=dx^2$ is the euclidean metric on $\mathbb{R}$.
\end{theorem}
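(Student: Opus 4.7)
The plan is to construct an explicit diffeomorphism between $SL_n \times \mathbb{R}$ and $GL_n^+$ by splitting off the determinant factor, and to check directly that it pulls back the trace metric to the product metric. The natural candidate is
\[
\Phi: SL_n \times \mathbb{R} \to GL_n^+, \qquad \Phi(B,x) = e^{x/\sqrt{n}}\, B,
\]
with smooth inverse
\[
A \longmapsto \bigl((\det A)^{-1/n}A,\ \tfrac{1}{\sqrt{n}}\log \det A\bigr),
\]
so $\Phi$ is a diffeomorphism. (The coefficient $1/\sqrt{n}$ is forced by the metric normalization $h=dx^2$, as the calculation below will reveal.)

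Next I would compute the differential. Since $\Phi$ is linear in $B$ and exponential in $x$, for $V \in T_B(SL_n)$ and $t\in \mathbb{R}$ one has
\[
(d\Phi)_{(B,x)}(V,t) \;=\; e^{x/\sqrt{n}} V \;+\; \tfrac{1}{\sqrt{n}}\, e^{x/\sqrt{n}}\, t\, B.
\]
Writing $A=\Phi(B,x)$, so that $A^{-1}= e^{-x/\sqrt{n}} B^{-1}$, the trace-metric at $A$ applied to two such images becomes (the exponential factors cancel):
\[
g_A\bigl((d\Phi)(V_1,t_1),(d\Phi)(V_2,t_2)\bigr) = tr\!\Bigl((B^{-1}V_1+\tfrac{t_1}{\sqrt{n}}I)(B^{-1}V_2+\tfrac{t_2}{\sqrt{n}}I)\Bigr).
\]
Expanding the product and taking traces produces four terms; the two mixed terms contain the factors $tr(B^{-1}V_1)$ and $tr(B^{-1}V_2)$, which vanish by Remark \ref{tangent_SL} because $V_i\in T_B(SL_n)$. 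The remaining terms are $tr(B^{-1}V_1 B^{-1}V_2) = g_B(V_1,V_2)$ and $\tfrac{t_1 t_2}{n}\,tr(I_n)=t_1 t_2$. Hence
\[
g_A\bigl((d\Phi)(V_1,t_1),(d\Phi)(V_2,t_2)\bigr) = g_B(V_1,V_2) + t_1 t_2,
\]
which is precisely $(g\times h)_{(B,x)}\bigl((V_1,t_1),(V_2,t_2)\bigr)$.

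This establishes that $\Phi$ is an isometry from $(SL_n\times \mathbb{R},\, g\times h)$ onto $(GL_n^+, g)$. The only genuinely delicate point is picking the correct rescaling factor in the exponential: one must reverse-engineer the constant from the requirement that the ``$t_1 t_2$'' contribution equals exactly $t_1 t_2$ (as opposed to $n t_1 t_2$), which is what dictates $f(x)=e^{x/\sqrt{n}}$. Everything else is bookkeeping: smoothness of $\Phi$ and its inverse, and the vanishing of the cross terms via the defining condition $tr(B^{-1}V)=0$ of the tangent space to $SL_n$ recalled in Remark \ref{tangent_SL}. The fact that $GL_n^+$ is an open Semi-Riemannian submanifold of $(GL_n, g)$ is immediate because it is a connected component, and the restriction of a nondegenerate metric to an open subset is nondegenerate of the same signature.
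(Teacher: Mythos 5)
Your proposal is correct and is essentially identical to the paper's own proof: the same map $F(P,x)=e^{x/\sqrt{n}}P$ with the same explicit inverse, the same computation of the differential, and the same cancellation of the cross terms via $tr(P^{-1}V)=0$. Nothing to add.
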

\begin{proof}
Note that at any $x \in \mathbb{R}$, $h_x(a,a')=aa'$ for every (tangent vectors) $a, a' \in \mathbb{R}$ and that: 

$(g \times h)_{(P,x)}((V,a), (V',a')) = g_P(V,V') +h_x(a,a') = tr(P^{-1}VP^{-1}V') + aa'$ 

for any $P \in SL_n$, $x \in \mathbb{R}$, $V, V' \in T_P(SL_n)$ (i.e. $tr(P^{-1}V)= tr(P^{-1}V')=0$, $V,V' \in M_n$), $a,a' \in T_x\mathbb{R} = \mathbb{R}$.

We prove that $F: (SL_n \times \mathbb{R}, g \times h) \to (GL_n^+, g)$, defined by $F(P,x) = e^{\frac{x}{\sqrt{n}}}P$, is an isometry. 

Indeed $F$ is of class $C^\infty$ with inverse $F^{-1}: GL_n^+ \to SL_n \times \mathbb{R}$ defined by $F^{-1}(Q)= (\dfrac{Q}{\sqrt[n]{\det (Q)}}, \dfrac{\log (\det (Q))}{\sqrt{n}})$, for any matrix $Q$ with positive determinant (``$\log$'' denotes the natural logarithm). 

We easily get
$$(DF_{(P,x)})(M,a)= e^{\frac{x}{\sqrt{n}}}M+\dfrac{e^{\frac{x}{\sqrt{n}}}}{\sqrt{n}} a P$$ 

for any $P \in SL_n$, $x \in \mathbb{R}$, $a \in T_x(\mathbb{R})$, $M \in T_P(SL_n)$ (i.e. for any $M \in M_n$ such that $tr(P^{-1}M)=0$). 

So, if $tr(P^{-1}M)=tr(P^{-1}M')=0$, we obtain:

\begin{multline*}
g_{_{F(P,x)}}((DF_{(P,x)})(M,a),(DF_{(P,x)})(M',a')) = \\
g_{e^{\frac{x}{\sqrt{n}}}P}(e^{\frac{x}{\sqrt{n}}}(M+\dfrac{a}{\sqrt{n}}P), e^{\frac{x}{\sqrt{n}}}(M'+\dfrac{a'}{\sqrt{n}}P))=
tr((P^{-1}M+\dfrac{aI}{\sqrt{n}})(P^{-1}M'+\dfrac{a'I}{\sqrt{n}})) =\\
 tr(P^{-1}MP^{-1}M') + \dfrac{a}{\sqrt{n}}tr(P^{-1}M') + \dfrac{a'}{\sqrt{n}}tr(P^{-1}M) + \dfrac{aa'}{n}tr(I) = \\
tr(P^{-1}MP^{-1}M') + aa'= g_P(M,M')+h_x(a,a')= (g \times h)_{(P,x)}((M,a),(M',a'))
\end{multline*}

for any $M, M' \in T_P(SL_n)$ and $a,a' \in T_x(\mathbb{R})=\mathbb{R}$.

This means that $F$ is an isometry between $(SL_n \times \mathbb{R}, g \times h)$ and $(GL_n^+, g)$.
\end{proof}

%--------------------------------------------acknowledgments:
\vspace{0.5cm} \indent {\it
A\,c\,k\,n\,o\,w\,l\,e\,d\,g\,m\,e\,n\,t\,s.\;} We want to thank Giorgio Ottaviani and Fabio Podest\`a for many discussions and their suggestions about the matter of this note.

%%%----------------------------------------------%%%
%%%                BIBLIOGRAPHY                  %%%
%%%  REFERENCES SHOULD BE LISTED ALPHABETICALLY  %%%
%%%----------------------------------------------%%%
\bigskip
\begin{center}

\end{center}

%-----------------------------------------------
% Authors (name, address, e-mail)
%-----------------------------------------------
\bigskip
\bigskip
\begin{minipage}[t]{10cm}
\begin{flushleft}
\small{
\textsc{Alberto Dolcetti}
\\*Dipartimento di Matematica e Informatica ``Ulisse Dini''
\\*Universit\`a degli Studi di Firenze
\\*Viale Morgagni 67/a
\\*Firenze, 50134, Italia
\\*e-mail: alberto.dolcetti@unifi.it
\\[0.4cm]
\textsc{Donato Pertici}
\\*Dipartimento di Matematica e Informatica ``Ulisse Dini''
\\*Universit\`a degli Studi di Firenze
\\*Viale Morgagni 67/a
\\*Firenze, 50134, Italia
\\*e-mail: donato.pertici@unifi.it
}
\end{flushleft}
\end{minipage}

%---------------------------------------------------- THE END
\end{document}